\documentclass[a4,12pt]{amsart}
\oddsidemargin 0mm
\evensidemargin 0mm
\topmargin 0mm
\textwidth 160mm
\textheight 230mm
\tolerance=9999
\usepackage{amssymb,amstext,amsmath,amscd,amsthm,amsfonts,enumerate,graphicx,latexsym,color,stmaryrd}
\usepackage[all]{xy}

\newtheorem{thm}{Theorem}[section]
\newtheorem{cor}[thm]{Corollary}
\newtheorem{lem}[thm]{Lemma}
\newtheorem{prop}[thm]{Proposition}

\newtheorem*{thm*}{Theorem}

\theoremstyle{definition}
\newtheorem{dfn}[thm]{Definition}
\newtheorem{rem}[thm]{Remark}

\newtheorem{ex}[thm]{Example}

\newtheorem*{claim*}{Claim}
\newtheorem*{ques*}{Question}

\newtheorem*{ac}{Acknowledgments}
\theoremstyle{remark}
\numberwithin{equation}{thm}
\def\m{\mathfrak{m}}
\def\n{\mathfrak{n}}
\def\q{\mathfrak{q}}
\def\s{\operatorname{s}}

\def\Ker{\operatorname{Ker}}

\def\Coker{\operatorname{Coker}}
\def\Gdim{\operatorname{G-dim}}

\def\H{\operatorname{H}}
\def\Tor{\operatorname{Tor}}
\def\Ext{\operatorname{Ext}}
\def\cx{\operatorname{cx}}
\def\gr{\operatorname{grade}}

\def\e{\operatorname{e}}
\def\height{\operatorname{ht}}

\def\Hom{\operatorname{Hom}}
\def\Ext{\operatorname{Ext}}

\def\curv{\operatorname{curv}}

\def\V{\operatorname{V}}
\def\supp{\operatorname{Supp}}
\def\Z{\mathbb{Z}}
\def\id{\mathrm{id}}

\def\cc{\mathfrak{c}}
\def\p{\mathfrak{p}}

\begin{document}
\setlength{\baselineskip}{15pt}
\title{Totally reflexive modules and Poincar\'e series}
\author{Mohsen Gheibi}
\address{Mohsen Gheibi, Department of Mathematics, University of Texas--Arlington,
411 S. Nedderman Drive,
Pickard Hall 445,
Arlington, TX, 76019, USA }
\email{mohsen.gheibi@uta.edu}
\author{Ryo Takahashi}
\address{Ryo Takahashi, Graduate School of Mathematics, Nagoya University, Furocho, Chikusaku, Nagoya{, Aichi} 464-8602, Japan{/Department of Mathematics, University of Kansas, Lawrence, KS 66045-7523, USA}}
\email{takahashi@math.nagoya-u.ac.jp}
\urladdr{http://www.math.nagoya-u.ac.jp/~takahashi/}
\thanks{2010 {\em Mathematics Subject Classification.} 13C13, 13D40}
\thanks{{\em Key words and phrases.} totally reflexive module, quasi-Gorenstein ideal, quasi-complete intersection ideal, G-regular local ring, Poincar\'e series, large homomorphisms.}
\thanks{Takahashi was partly supported by JSPS Grant-in-Aid for Scientific Research 16K05098 and JSPS Fund for the Promotion of Joint International Research 16KK0099}

\begin{abstract}
We study Cohen-Macaulay non-Gorenstein local rings $(R,\m,k)$ admitting certain totally reflexive modules. {More precisely, we}
give a description of the Poincar\'{e} series of $k$ by using the Poincar\'{e} series of  {a non-zero} totally reflexive module {with minimal multiplicity}.
Our results generalize a result of Yoshino to higher-dimensional Cohen-Macaulay local rings.
Moreover, from a quasi-Gorenstein ideal satisfying some conditions, we construct a family of non-isomorphic indecomposable totally reflexive modules having an arbitrarily large minimal number of generators.
\end{abstract}

\maketitle
\section{Introduction}

In the 1960s, Auslander and Bridger \cite{A,ABr} introduced the notion of Gorenstein dimension, G-dimension for short, for finitely generated modules as a generalization of projective dimension.
Over Gorenstein rings, every finitely generated module has finite G-dimension, similarly to the fact that over regular rings every module has finite projective dimension, and the class of maximal Cohen-Macaulay modules coincides with the class of modules with G-dimension zero, which are called totally reflexive modules.
Over rings that are not G-regular in the sense of \cite{greg} {(see Definition \ref{gregular})}, there exist modules of finite G-dimension and infinite projective dimension, which is equivalent to the existence of non-free totally reflexive modules.
Yoshino \cite[Theorem 3.1]{Y} proved the following remarkable result for Artinian short local rings {possessing non-free totally reflexive modules}.

\begin{thm*}[Yoshino]
Let $(R,\m,k)$ be {a} non-Gorenstein local ring of type $r$ such that $\m^3=0\ne\m^2$.
Assume {that} $R$ contains $k$, and that there exists a non-free totally reflexive $R$-module $M$.
Then the following statements hold.
\begin{enumerate}[\rm(1)]
\item
The ring $R$ {is}  a Koszul algebra.
One has the Poincar\'{e} series $P^R_k(t)=\frac{1}{(1-t)(1-rt)}$, the Bass series $I^R_R(t)=\frac{r-t}{1-rt}${,}  and the Hilbert series $H_R(t)=(1+t)(1+rt)$.
\item The Poincar\'{e} series of $M$ is $P^R_M(t)=\frac{\nu_R(M)}{1-t}$.
\end{enumerate}
\end{thm*}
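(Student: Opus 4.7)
The plan is to extract structural information about both $R$ and $M$ from the minimal free resolution $F_\bullet\colon\cdots\to R^{n_1}\xrightarrow{\partial_1}R^{n_0}\to M\to 0$ and its $R$-dual, making essential use of $\m^3=0$. Minimality gives $\partial_i(F_i)\subseteq\m F_{i-1}$, and total reflexivity forces $\Hom_R(F_\bullet,R)$ to be exact and to yield (after reindexing) the minimal free resolution of $M^\ast$. Moreover, any composition of two differentials has matrix entries in $\m^2\subseteq\mathrm{soc}(R)$, while any composition of three differentials vanishes trivially.

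The crucial first step is part~(2), that is, $\nu_R(\Omega^nM)=\nu_R(M)$ for every $n\ge 0$. I would proceed by studying the short exact sequence $0\to\Omega M\to R^{n_0}\to M\to 0$ together with its $R$-dual $0\to M^\ast\to R^{n_0}\to(\Omega M)^\ast\to 0$ and the minimal free resolution of $M^\ast$ obtained by dualizing. The entries of $\partial_1\partial_2$ lying in $\m^2$, together with the socle bound $\dim_k\mathrm{soc}(R)=r$ and the rank data read off from the dual resolution, constrain $n_0$ and $n_1$. Since every $\Omega^iM$ is again non-free totally reflexive, the same analysis applies inductively, giving $n_0=n_1=n_2=\cdots=\nu_R(M)$ and hence $P^R_M(t)=\nu_R(M)/(1-t)$.

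With the Betti sequence of $M$ pinned down, I would extract the Hilbert series of $R$ by combining the above short exact sequence with the $\m$-adic filtrations of $M$ and $\Omega M$; the constraints from $\m^3=0$ and the socle computation force $\dim_k(\m/\m^2)=r+1$ and $\dim_k\m^2=r$, whence $H_R(t)=(1+t)(1+rt)$. To establish the Koszul property I would construct a linear resolution of $k$, either directly from $F_\bullet$ and $F_\bullet^\ast$ or by verifying numerically that $\mathrm{gr}_\m R$ has the expected number of quadratic relations. Koszul duality then yields $P^R_k(t)=1/H_R(-t)=1/((1-t)(1-rt))$.

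Finally, the Bass series follows via Matlis duality, which identifies $I^R_R(t)$ with $P^R_{R^\vee}(t)$ for $R^\vee=\Hom_k(R,k)\cong E(k)$. Since $R^\vee$ is a Koszul $R$-module whose shifted Hilbert series is the reverse of $H_R$, namely $r+(r+1)t+t^2$, a second application of Koszul duality gives $I^R_R(t)=H_{R^\vee}(-t)/H_R(-t)=(1-t)(r-t)/((1-t)(1-rt))=(r-t)/(1-rt)$. The main obstacle is the rigidity argument in the second paragraph: proving constancy of the Betti numbers of $M$ requires delicately combining the hypotheses on $M$ and $R$ and leveraging $\m^3=0$, without yet having access to $H_R(t)$ or $P^R_k(t)$; once this rigidity is in place, the remaining assertions reduce to Hilbert-series bookkeeping and Koszul duality.
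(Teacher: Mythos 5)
Note first that the paper does not actually prove Yoshino's theorem: it quotes it from \cite{Y} and then proves a generalization, Theorem~\ref{Main1}, by a quite different route. In dimension zero the paper's argument hinges on the canonical module $\omega$: one uses Gerko's result to get $I_R^R(t)=\tfrac{r-t}{1-rt}$, observes that $\Tor^R_{>0}(M,\omega)=0$ and that $M\otimes_R\Omega\omega$ is a $k$-vector space, and then factors Poincar\'e series as $\nu_R(M)(r^2-1)\,P^R_k(t)=P^R_M(t)\,P^R_{\Omega\omega}(t)$, with $P^R_{\Omega\omega}(t)$ read off from the Bass series. No claim about constancy of Betti numbers of $M$, about the Hilbert series, or about the Koszul property is ever made directly; these are either bypassed or (in the case of $H_R(t)$ and Koszulness) not recovered at all by Theorem~\ref{Main1}. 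Your approach is therefore genuinely different from the paper's and is in fact closer in spirit to Yoshino's original proof and to Christensen--Veliche's complete-resolution analysis.

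That said, as written your sketch has a real gap in the step you yourself flag as the main obstacle, and in at least one further place. You propose to prove $\nu_R(\Omega^nM)=\nu_R(M)$ first, using only the $\m^3=0$ constraint, the socle bound, and the dual resolution. The natural way to control syzygy ranks is by a length count: any nonzero syzygy $Z$ in a minimal complete resolution satisfies $\m^2Z=0$, and Gerko's lemma then gives $\ell_R(Z)=(r+1)\nu_R(Z)$; from $0\to Z_{i+1}\to R^{\nu(Z_i)}\to Z_i\to 0$ one gets $\nu(Z_{i+1})=\rho\,\nu(Z_i)$ with $\rho=(\ell(R)-r-1)/(r+1)$, and running this over the full (two-sided) complete resolution forces $\rho=1$ and hence $\ell_R(R)=2(r+1)$ together with constant Betti numbers. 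This is a clean argument, but it only yields $\dim_k(\m/\m^2)+\dim_k\m^2=2r+1$; it does not by itself produce the Hilbert series $(1+t)(1+rt)$, i.e.\ it does not show $\dim_k\m^2=r$ rather than something smaller with $\dim_k(\m/\m^2)$ correspondingly larger. You would still need a separate argument that $\m^2=\operatorname{soc}(R)$, and nothing in your sketch addresses it. Likewise, the Koszul step is asserted (``construct a linear resolution of $k$, either directly from $F_\bullet$ and $F_\bullet^\ast$ or by verifying numerically that $\mathrm{gr}_\m R$ has the expected number of quadratic relations''), but neither option is an obvious consequence of what precedes it, and Koszul duality $P^R_k(t)=1/H_R(-t)$ requires Koszulness as input, not just the numerics. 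So the chain ``rigidity $\Rightarrow$ $H_R(t)$ $\Rightarrow$ Koszul $\Rightarrow$ $P^R_k(t)$'' is not self-supporting as it stands: each arrow needs a substantive additional idea that you have not supplied.
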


\noindent
{Here, $\nu_R(M)$ stands for the minimal number of generators of $M$.}

Later, Christensen and Veliche \cite{CV} generalized Yoshino's theorem in terms of acyclic complexes over a non-Gorenstein local ring $(R,\m)$ with $\m^3=0\ne\m^2$; see \cite[Theorem A]{CV}.

In celebration of Yoshino's theorem, Golod and Pogudin \cite{GP} call a ring satisfying the assumption of the theorem, a \emph{Yoshino algebra}.
It turns out that over Yoshino algebras, every non-free indecomposable totally reflexive module $M$ satisfies $\m^2M=0$.
Gerko \cite{Ger} showed that an Artinian non-Gorenstein local ring $(R,\m)$ of type $r$ admitting a non-free totally reflexive module $M$ with $\m^2M=0$ has Bass series $I^R_R(t)=\frac{r-t}{1-rt}$. This naturally leads us to the following question.

\begin{ques*}
Let $(R,\m,k)$ be an Artinian non-Gorenstein local ring admitting a  {non-zero} totally reflexive module $M$ with $\m^2M=0$.
Then what can one say about the Poincar\'{e} series of $k$?
\end{ques*}

In this paper we answer the above question, including even the higher-dimensional case.
More explicitly, our main result is the following. We denote by $(-)^*$ the algebraic dual functor $\Hom_R(-,R)$.

\begin{thm}\label{Main1}
Let $(R,\m,k)$ be a $d$-dimensional Cohen-Macaulay non-Gorenstein local ring of type $r$.
Let $M\ne0$ be a totally reflexive $R$-module with $\m^2M\subseteq \q M$ for some parameter ideal $\q$ of $R$.
Then the following hold.
\begin{enumerate}[\rm(1)]
\item
One has
$$
P^R_k(t)=\frac{P^R_M(t)(1+t)^d}{\nu_R(M)(1-rt)}.
$$ {In particular, if $M'\neq 0$ is another totally reflexive $R$-module with $\m^2M' \subseteq \q M'$, then
$$
\displaystyle\frac{P^R_M(t)}{\nu_R(M)}=\frac{P^R_{M'}(t)}{\nu_R(M')}.
$$ }
\item
For each {totally reflexive} $R$-module $N$ there is an inequality of power series
$$
\frac{P^R_N(t)}{1-rt}\preccurlyeq \frac{C\cdot P^R_M(t)}{1-rt},\text{ where }C=\frac{r\cdot\ell_R(N/\q N)-\ell_R(N^*/\q N^*)}{(r^2-1)\nu_R(M)}.
$$
\end{enumerate}
\end{thm}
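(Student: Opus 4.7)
The plan is to reduce to the Artinian case by modding out the parameter ideal $\q$, and then derive the Poincar\'e series identity from the structural constraint $\overline{\m}^2\overline{M}=0$.

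Since $M$ (and, by total reflexivity, $M^*$) is maximal Cohen--Macaulay over the Cohen--Macaulay ring $R$, the sequence generating $\q$ is regular on $R$, $M$, and $M^*$. Writing $\overline{R}=R/\q$ and $\overline{M}=M/\q M$, standard change-of-rings formulas give $P^R_M(t)=P^{\overline{R}}_{\overline{M}}(t)$ and $P^R_k(t)=(1+t)^d P^{\overline{R}}_k(t)$, while $\overline{R}$ becomes Artinian non-Gorenstein of type $r$, $\overline{M}$ becomes totally reflexive over $\overline{R}$ with $\overline{\m}^2\overline{M}=0$, and the lengths $\ell_R(N/\q N)$, $\ell_R(N^*/\q N^*)$ appearing in (2) are preserved. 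Both parts therefore reduce to their Artinian analogues, so I may assume $R$ is Artinian and $\m^2M=0$; the goal for (1) becomes $P^R_M(t)=\nu_R(M)(1-rt)P^R_k(t)$.

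In the Artinian case of (1), three structural facts drive the argument: (i) $\m^2 M^*=0$, since $(xy\varphi)(m)=\varphi(xym)=0$ for $\varphi\in M^*$, $x,y\in\m$, $m\in M$; (ii) $\operatorname{Soc}(M)=\m M$, because a socle element outside $\m M$ would extend to a minimal generating set of $M$ and thereby split off a direct summand isomorphic to $k$, which would then be totally reflexive and force $R$ to be Gorenstein; (iii) reflexivity $M\cong M^{**}$ gives $\operatorname{Soc}(M)=\Hom_R(M^*/\m M^*,R)\cong\operatorname{Soc}(R)^{\nu_R(M^*)}\cong k^{r\,\nu_R(M^*)}$. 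From the minimal presentation $0\to\Omega M\to F_0\to M\to 0$, the containments $\m^2 F_0\subseteq\Omega M\subseteq\m F_0$ with $\m\Omega M=\m^2F_0$ yield $\beta^R_1(M)=\beta^R_1(k)\,\nu_R(M)-\dim_k\m M$, and combining with (ii)--(iii) produces the first-level identity $\beta^R_1(M)=\beta^R_1(k)\,\nu_R(M)-r\,\nu_R(M^*)$. To upgrade this to a closed formula for every $\beta^R_i(M)$, one must analyze the full complete resolution $\cdots\to F_1\to F_0\to G_0\to G_1\to\cdots$ and exploit the duality of $(-)^*$ symmetrically; the crucial intermediate output is $\nu_R(M)=\nu_R(M^*)$, after which the recursion $\beta^R_i(M)=\nu_R(M)\bigl(\beta^R_i(k)-r\,\beta^R_{i-1}(k)\bigr)$ assembles into the claimed identity. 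The uniqueness assertion in (1) is then immediate.

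For part (2), after reduction and substitution of (1), the inequality becomes $P^R_N(t)/(1-rt)\preccurlyeq C\,\nu_R(M)\,P^R_k(t)$. The constant $C=\frac{r\,\ell_R(\overline{N})-\ell_R(\overline{N}^*)}{(r^2-1)\nu_R(M)}$ is arranged so that its $t=0$ coefficient encodes the numerical inequality $(r^2-1)\nu_R(N)\le r\,\ell_R(\overline{N})-\ell_R(\overline{N}^*)$, an Euler-characteristic-type relation over the Artinian non-Gorenstein ring of type $r$. To propagate to all coefficients, I would induct on the syzygy index: $\Omega^i N$ remains totally reflexive and the invariants in $C$ transform predictably, while (1) gives precise control over $P^R_M(t)$ at each step. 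The principal obstacle throughout is the upgrade in (1) from the first-level identity to the full recursion: the syzygy $\Omega M$ does not automatically satisfy $\m^2\Omega M=0$ (one has only $\m\Omega M=\m^2 F_0$), so a naive syzygy induction fails, and the full duality symmetry of the complete resolution must be exploited.
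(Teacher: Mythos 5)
Your reduction to the Artinian case via the parameter ideal $\q$ is the same first move the paper makes, and your structural observations about $M^*$ and the socle are sound (in particular, $\dim_k\operatorname{Soc}(M)=r\nu_R(M^*)$ via the isomorphism $\Hom_R(k,M^{**})\cong\Hom_R(M^*/\m M^*,R)$ is a valid derivation). However, there are two genuine gaps, and you flag the second one yourself.

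The first gap is in your ``first-level identity.'' You assert $\m\Omega M=\m^2F_0$, but only the inclusion $\m\Omega M\subseteq\m^2F_0$ (from $\Omega M\subseteq\m F_0$) is automatic; the reverse inclusion $\m^2F_0\subseteq\m\Omega M$ is not. Knowing $\m^2F_0\subseteq\Omega M$ only tells you that $\Omega M/\m^2F_0$ is a $k$-vector space, not that it equals $\Omega M/\m\Omega M$. Without that equality you only get the inequality $\beta_1^R(M)\geq\nu_R(M)\beta_1^R(k)-\dim_k\m M$, not the asserted identity. The second gap, which you candidly identify, is the passage from one Betti number to the full recursion $\beta_i^R(M)=\nu_R(M)(\beta_i^R(k)-r\beta_{i-1}^R(k))$: since $\m^2\Omega M=0$ fails in general, the structural hypothesis does not propagate along syzygies, and you have no mechanism to run an induction. ``Exploiting the full duality of the complete resolution'' is a hope, not an argument, so part (1) is not established, and part (2) inherits the problem because you substitute (1) into it.

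The paper's proof avoids all of this by a single clean device. After reducing to the Artinian case, pass to the completion so that the canonical module $\omega$ exists, and form $\Omega\omega$ via $0\to\Omega\omega\to F\to\omega\to 0$. Since $\overline M$ is totally reflexive over $\overline R=R/\q$, it has finite $G$-dimension over $R$, hence $\Tor^R_{>0}(\overline M,\omega)=0$; tensoring the short exact sequence with $\overline M$ then gives an embedding $\overline M\otimes\Omega\omega\hookrightarrow\overline M\otimes F$ whose image lies in $\m(\overline M\otimes F)$, so $\m^2\overline M=0$ forces $\overline M\otimes\Omega\omega\cong k^{bs}$ where $b=\nu_R(M)$ and $s=\nu_R(\Omega\omega)$. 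The Tor-vanishing makes the Poincar\'e series multiplicative, $bs\,P_k^R(t)=P^R_{\overline M}(t)P^R_{\Omega\omega}(t)$, and $P^R_{\Omega\omega}(t)$ is computed directly from the Bass series $I_R^R(t)=t^d(r-t)/(1-rt)$, giving $P^R_{\Omega\omega}(t)=(r^2-1)/(1-rt)$ and $s=r^2-1$. This determines all Betti numbers of $M$ simultaneously, with no recursion on syzygies at all. Part (2) is handled by the same tensor construction $L=N\otimes\Omega\omega$ together with the elementary bound $P^R_L(t)\preccurlyeq\ell_R(L)P^R_k(t)$ and a Matlis-duality computation of $\ell_R(L)$. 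If you want to complete your approach you would, in effect, need to rediscover some form of this global device; the ``local'' syzygy-by-syzygy recursion does not close.
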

\noindent{Here $\ell_R(X)$ stands for the length of an $R$-module $X$.}

Levin \cite{Lev} introduced the notion of a large homomorphism {of local rings}: a surjective homomorphism $f:R\rightarrow S$ of local rings is called \emph{large} if the induced map $f_*:\Tor^R_{*}(k,k)\rightarrow \Tor^S_{*}(k,k)$ of graded algebras is surjective. It has been shown that $f:R\rightarrow S$ is large if and only if $P^R_M(t)=P^S_M(t)P^R_S(t)$ for every finitely generated $S$-module $M$. {There are} few examples of {large homomorphisms} in the literature; see \cite[Theorem 2.2 and 2.4]{Lev}. {Also, in} \cite[Theorem 6.2]{AHS} {it is shown that the natural homomorphism $R\rightarrow R/I$, where $I$ is a quasi-complete intersection ideal with $I\cap\m^2=I\m$, is large}. As an application of  Theorem \ref{Main1}, we {give} another {example} of large homomorphisms.

\begin{thm}\label{C21}
Let $(R,\m)$ be a Cohen-Macaulay non-Gorenstein local ring.
Let $I$ be a proper ideal of $R$ with $\m^2\subseteq I$ and $\Gdim_R(R/I)<\infty$.
Then
\begin{enumerate}[\rm(1)]
\item
The homomorphism $R\rightarrow R/I$ is large.
\item
If $R$ is a standard graded ring, then $R$ is a Koszul ring if and only if $R/I$ is a Koszul $R$-module.
\end{enumerate}
\end{thm}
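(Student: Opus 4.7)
The plan is to deduce both parts of Theorem \ref{C21} from Theorem \ref{Main1} together with Levin's numerical criterion: a surjection $f\colon R\to S$ is large if and only if $P^R_k(t)=P^S_k(t)\cdot P^R_S(t)$.

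First I would reduce to the Artinian case. Since $\m^2\subseteq I$, the ideal $I$ is $\m$-primary with $\height I=d$, and prime avoidance in the Cohen-Macaulay ring $R$ supplies a parameter ideal $\q\subseteq I$. Set $\bar R:=R/\q$; this is an Artinian non-Gorenstein local ring of the same type $r$. By Auslander--Bridger, $\Gdim_R(R/I)=d$, and since $\q$ is a regular sequence on $R$ annihilating $R/I$, the descent formula $\Gdim_R(R/I)=\Gdim_{\bar R}(R/I)+d$ forces $\Gdim_{\bar R}(R/I)=0$; thus $R/I$ is a cyclic nonzero totally reflexive $\bar R$-module with $\m_{R/I}^2=0$. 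Quotienting by a regular sequence is a large homomorphism, and large homomorphisms compose, so it suffices to prove that $\bar R\to R/I$ is large. Applying Theorem \ref{Main1}(1) to the Artinian ring $\bar R$ with $M=R/I$ and $\q=0$ gives $P^{\bar R}_k(t)=P^{\bar R}_{R/I}(t)/(1-rt)$, while $\m_{R/I}^2=0$ yields $P^{R/I}_k(t)=1/(1-nt)$, where $n:=\dim_k(\m/I)$. Hence largeness is equivalent to the numerical identity $n=r$.

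Comparing the linear coefficients of $(1-rt)P^{\bar R}_k(t)$ with $P^{\bar R}_{R/I}(t)$ forces $\nu_{\bar R}(\bar I)=\bar e-r$, where $\bar e:=\dim_k(\bar\m/\bar\m^2)$; the canonical surjection $\bar I/\bar\m\bar I\twoheadrightarrow\bar I/\bar\m^2$ then yields $\bar e-r=\nu_{\bar R}(\bar I)\ge\dim_k(\bar I/\bar\m^2)=\bar e-n$, so $n\ge r$. The reverse inequality $n\le r$---the heart of the matter and what I expect to be the main obstacle---I would extract by applying Theorem \ref{Main1} a second time: either part (1) to the totally reflexive dual $M^*=(0:_{\bar R}\bar I)$, using the length-preservation $\ell_{\bar R}(M)=\ell_{\bar R}(M^*)=1+n$ for totally reflexive modules (proved by induction on the free resolution), or part (2) to $N=\Omega^1_{\bar R}(R/I)=\bar I$; either route should produce enough numerical information to pin $n$ down to $r$.

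For part (2), once (1) is in hand, the equivalence is essentially a direct calculation. Since $\m_{R/I}^2=0$, the ring $R/I$ is trivially Koszul, so $P^{R/I}_k(t)\cdot H_{R/I}(-t)=1$; combining with largeness rewrites $P^R_k(t)=P^R_{R/I}(t)/H_{R/I}(-t)$. Then $R$ is Koszul (i.e.\ $P^R_k(t)\cdot H_R(-t)=1$) precisely when $P^R_{R/I}(t)\cdot H_R(-t)=H_{R/I}(-t)$, which is exactly the condition that $R/I$ is a Koszul $R$-module.
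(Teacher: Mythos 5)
Your reduction to the Artinian case contains a genuine error: \emph{quotienting by a regular sequence is not, in general, a large homomorphism}. Levin's theorem only gives largeness of $R\to R/(x)$ when $x$ is a nonzerodivisor \emph{outside} $\m^2$; if $x\in\m^2$ (e.g. $x^2$ in $k[[x,y]]$) the map fails to be large. Since you choose $\q\subseteq I$ merely by prime avoidance, nothing guarantees the generators of $\q$ avoid $\m^2$. The paper circumvents this by first proving (Lemma~\ref{m2}) that $R/\m^2$ has infinite G-dimension, so $I\supsetneq\m^2$ strictly, then picking a \emph{single} regular element $x\in I\setminus\m^2$ and inducting one dimension at a time; at each stage the factor map $R\to R/(x)$ is large precisely because $x\notin\m^2$.

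Your treatment of the Artinian case is also incomplete where you admit it is: you prove $n\ge r$ cleanly from the linear coefficient of $(1-rt)P^{\bar R}_k(t)=P^{\bar R}_{R/I}(t)$, but the reverse inequality is only gestured at. The route you suggest (applying Theorem~\ref{Main1} to $M^*$, or using length-preservation under dualization) does not obviously close the gap. The paper's Lemma~\ref{L22} instead invokes Gerko's structural result (Lemma~\ref{type}): applied to $M=R/I$ with $\nu_R(R/I)=1$, it yields an exact sequence $0\to k^{r}\to R/I\to k\to 0$, which immediately forces $\ell_R(R/I)=r+1$ and hence $n=\dim_k(\m/I)=r$. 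Without something equivalent to Gerko's result you cannot pin down $n$.

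Finally, your argument for part (2) relies on the ungraded Fr\"oberg-type identity $P^R_M(t)=H_M(-t)/H_R(-t)$ as a characterization of Koszul modules, but this identity is only a \emph{necessary} condition for linearity (it captures alternating sums $\sum_i(-1)^i\beta_{i,j}$, which do not determine the individual graded Betti numbers). The paper uses the two-variable graded Poincar\'e series $P^R_M(s,t)$ together with largeness to control the bigraded Betti numbers directly, which is what one needs to conclude linearity of the resolution.
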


There are ideals satisfying in the hypothesis of Theorem \ref{C21} which are not quasi-complete intersection; see Example \ref{EX}(2). Moreover, we show that Theorem \ref{C21} fails if $\Gdim_R(R/I)=\infty$; see Example \ref{short}.

Our second aim in this paper is to give a construction of infinitely many non-isomorphic indecomposable totally reflexive modules from given ideals of arbitrary local rings. Construction of indecomposable totally reflexive modules with specific properties is widely investigated recently; see \cite{AV,CJRSW,Holm,CGT,RSW,GP}.
Here, all the references except \cite{GP} use exact pairs of zero-divisors for their construction.
There exist non-Gorenstein rings {that do not admit an exact pair of zero-divisors but admit a non-free totally reflexive module}; see for example \cite[Propositions 9.1 and 9.2]{CJRSW}.
Our construction uses quasi-Gorenstein ideals, which are the generalization of ideals generated by exact zero-divisors, and recovers the results of \cite{CGT,RSW}; see Examples \ref{qci2} and \ref{EX}. More precisely, we prove the following in section 4.

\begin{thm}\label{2219}
Let $(R,\m,k)$ be a local ring.
Let $I$ be  {a quasi-Gorenstein} ideal of $R$ with $\gr I=0$ and $\dim_RI/I^2\ge2$.
Suppose that either of the following holds.
\begin{enumerate}[\rm(a)]
\item
 {$(0:_RI^2)=(0:_RI)$, or}
\item
 $R/I$ is G-regular.
\end{enumerate}
Then for every integer $n\ge1$, there exists a family of exact sequences of $R$-modules
$$
\{0 \to (R/I)^n \to M_{{n,}u} \to R/(0:_RI) \to 0\}_{u\in k}
$$
such that the modules $M_{{n,}u}$ are non-free and pairwise {non-isomorphic} indecomposable {totally reflexive} $R$-modules with $\nu_R(M_{n,u})=n+1$.
In particular, if $k$ is infinite, then  {for each $n\geq 1$} there exist infinitely many such modules $M_{{n,}u}$.
\end{thm}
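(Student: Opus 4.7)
The plan is to build each $M_{n,u}$ as the cokernel of an explicit presentation matrix whose entries lie in $\m$ and encode elements of $I$ and of $(0:_R I)$, with the parameter $u\in k$ appearing as a single off-diagonal entry. The quasi-Gorenstein hypothesis on $I$ together with $\gr I = 0$ should give that $R/I$ is itself a non-free totally reflexive $R$-module and that $(R/I)^{\ast} \cong R/(0:_R I)$, so that $R/(0:_R I)$ is totally reflexive as well; the condition $\dim_R I/I^2 \ge 2$ provides enough freedom within $I/I^2$ to run the one-parameter twist nontrivially.

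Concretely, I would fix two $R$-linearly independent elements $x_1, x_2 \in I$ (working modulo $I^2$) and an element $y \in (0:_R I)$ generating a suitable socle-like piece, and assemble them into a block presentation matrix $\varphi_{n,u}\colon R^{n+2}\to R^{n+1}$ whose diagonal blocks are $x_1 \cdot \mathrm{id}_n$ and $x_1$, with the off-diagonal block a column depending linearly on $y$, $x_2$, and $u$. A direct column reduction identifies the kernel of the natural surjection $M_{n,u} := \Coker(\varphi_{n,u}) \twoheadrightarrow R/(0:_R I)$ with $(R/I)^n$, yielding the claimed short exact sequence; since every entry of $\varphi_{n,u}$ lies in $\m$, one gets $\nu_R(M_{n,u}) = n+1$.

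The heart of the argument is the verification of the three nontrivial properties. For total reflexivity, I would dualize the defining short exact sequence and use $\Ext^{\ge 1}_R(R/I, R) = 0 = \Ext^{\ge 1}_R(R/(0:_R I), R)$, both consequences of the quasi-Gorenstein hypothesis, together with condition (a) or (b) to conclude that the biduality map $M_{n,u}\to M_{n,u}^{\ast\ast}$ is an isomorphism; this is the step at which the two alternative hypotheses enter in genuinely different ways, with (a) controlling the first syzygy directly and (b) forcing residual totally reflexive $R/I$-modules to be free. Indecomposability reduces to showing that the endomorphism ring $\mathrm{End}_R(M_{n,u})$ is local, which I would attack by lifting an endomorphism to the presentation $\varphi_{n,u}$ and analyzing the resulting matrix equations modulo $\m$. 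Non-isomorphism of $M_{n,u}$ and $M_{n,u'}$ for $u \ne u'$ is extracted by the same lifting technique, reading off $u - u'$ as an obstruction in a controlled quotient. I expect this last step, carried out in tandem with the indecomposability argument, to be the most delicate portion of the proof, since both rest on controlling the group of automorphisms of $R^{n+1}$ preserving the class of the presentation matrix.
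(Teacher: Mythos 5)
Your high-level strategy (explicit presentation matrices encoding elements of $I$ and $(0:_RI)$, with a parameter $u$ appearing as a twist) matches the paper's approach in spirit, but there are several genuine gaps.

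First, the presentation matrix is misspecified. You propose an $(n+1)\times(n+2)$ matrix whose diagonal is $x_1\cdot\mathrm{id}$ for a single element $x_1\in I$. This would present $(R/I)^n$ only if $I=(x_1)$ were principal, which is not assumed (and indeed the quasi-Gorenstein hypothesis forces only $(0:_RI)$ to be principal, not $I$). The paper's matrix $T(\underline{x},y,\underline{a})$ is $(nr+1)\times(n+1)$ where $r=\nu_R(I)$, using the full generating set $x_1,\dots,x_r$ of $I$ in each block. Without this, the kernel of the surjection $M_{n,u}\twoheadrightarrow R/(0:_RI)$ will not be $(R/I)^n$.

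Second, you have misread the hypothesis $\dim_RI/I^2\ge2$. This is a statement about the \emph{Krull dimension} of the support of $I/I^2$, not about $I$ having two ``$R$-linearly independent'' generators modulo $I^2$. Its role is entirely different from what you propose: one shows $\dim R/(I+(0:_RI))=\dim_R I/I^2\ge2$, whence the quantity $\s(I+(0:_RI))$ (the supremum of minimal numbers of generators of ideals of $R/(I+(0:_RI))$) is infinite. This is what guarantees the existence, for every $n$, of elements $a_1,\dots,a_n,b\in R$ whose images minimally generate an $(n+1)$-generated ideal in $R/(I+(0:_RI))$ --- the raw material for the off-diagonal column of $T(\underline{x},y,\underline{a})$. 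The twist $u$ then replaces $a_1$ by $a_1+ub$.

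Third, you place the hypotheses (a) and (b) in the wrong part of the argument. Total reflexivity of $M_{n,u}$ is automatic once one knows $R/I$ and $R/(0:_RI)$ are totally reflexive (which follows from quasi-Gorensteinness and $\gr I=0$ alone), since an extension of totally reflexive modules is totally reflexive; no dualizing argument involving (a) or (b) is needed there. Hypotheses (a) and (b) instead enter in the decomposition lemma (Lemma~\ref{41723}): given a direct-sum decomposition $M\cong X\oplus Y$, one needs to conclude that a residual $R/I$-module $Y$ splits off $(R/I)^n$ freely. Under (a) this is achieved via $\Hom_R(Y,R/(0:_RI))=0$ (Lemma~\ref{lem12}); under (b) one uses that totally reflexive $R/I$-modules are free. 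Finally, the paper establishes indecomposability and pairwise non-isomorphism by comparing Fitting invariants of the presentation matrices, which is substantially cleaner than the direct analysis of $\mathrm{End}_R(M_{n,u})$ you outline --- and you yourself flag that step as the one you cannot yet complete.
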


Finally, we give examples of our construction over rings that satisfy {the} conditions of Theorem \ref{Main1}; see Examples \ref{qci2} and  \ref{EX}(3).

\section{Preliminaries}

Throughout $R$ is a commutative Noetherian local ring with the maximal ideal $\m$.
For an $R$-module $M$ denote $\beta^R_n(M)$ ($\mu^n_R(M)$) the $n$-th \emph{Betti number} ($n$-th \emph{Bass number}) of $M$.  The \emph{Poincar\'{e} series} (\emph{Bass series}) of $M$ over $R$ is defined by
$$P^R_M(t)=\sum^{\infty}_{i=0}\beta^R_n(M)t^n \ \ (I^M_R(t)=\sum^{\infty}_{i=0}\mu^n_R(M)t^n).$$
The \emph{complexity} of $M$ over $R$ is defined by
$$\cx_R{(}M{)} = \inf \{d \in \mathbb{N} \mid \exists \ r \in \mathbb{R} \ such \ that \ \beta^R_n(M) \leq rn^{d-1} \ for \ n \gg 0 \}.$$
{The \emph{curvature} of $M$ over $R$ is defined by $\curv_R(M)=\underset{n\rightarrow \infty}\limsup \sqrt[n]{\beta^R_n(M)}$.}

\begin{dfn}\label{gdzero}
A finitely generated $R$-module $M$ is called \emph{totally reflexive} (or of $G$-\emph{dimension zero}) if the evaluation homomorphism $M\to M^{**}$ is an isomorphism and $\Ext_R^i(M,R)=0=\Ext_R^i(M^*,R)$ for all $i>0$.
\end{dfn}

Recall that the infimum  {non-negative} integer $n$ for which there exists an exact sequence $$0 \to G_{n} \to \dots  \to G_{0} \to M\to 0,$$
such that each $G_{i}$ is {a} totally reflexive {$R$-module}, is called the \emph{Gorenstein dimension} of $M$. If $M$ has Gorenstein dimension $n$, we write $\Gdim_R(M)=n$.

\begin{dfn}\label{df1} A proper ideal $I$ of $R$ is called \emph{quasi-Gorenstein} in sense of \cite{AF} if $$\Ext^i_R(R/I,R)\cong \left\lbrace
           \begin{array}{c l}
              R/I\ \ & \text{ \ \ $i=\gr I$,}\\
              0\ \   & \text{   \ \ $\textrm{otherwise}$.}
           \end{array}
        \right.$$
\end{dfn}

\begin{rem}\label{remqg}  Let $I$ be a quasi-Gorenstein ideal. Then the following hold.
\begin{enumerate}[(1)]
\item
The ideal $I$ is a $G$-perfect ideal in the sense of Golod \cite{Gol}, that is, $\Gdim_R(R/I)=\gr I$. In particular, $\Omega^{\gr I}(R/I)$ is a
totally reflexive $R$-module.
\item
Suppose that $\gr I=0$, i.e., $(0:_RI)\ne0$. One then has {
\par
 $(0:_RI)\cong (R/I)^* \cong R/I$,\qquad
 $R/(0:_RI)\cong I^*$, and \qquad
 $(0:_R(0:_RI))=I$. } \\
In particular, {$(0:_RI)$ is a principal ideal, and} $R/I$ {and $R/(0:_RI)$ are} totally reflexive $R$-modules.
\end{enumerate}
\end{rem}

We recall the definition of a quasi-complete intersection ideal from \cite{AHS}.

\begin{dfn} Let $I$ be an ideal of a local ring $R$. Set $\overline{R}=R/I$ and let $K$ be the Koszul complex with respect to a minimal generating set of $I$. Then $I$ is said to be \emph{quasi-complete intersection} if $\H_1(K)$ is free $\overline{R}$-module, and the natural homomorphism
$$\lambda_*^{\overline R}:\wedge^{\overline{R}}_*\H_1(K)\longrightarrow \H_*(K)$$
of graded $\overline{R}$-algebras with $\lambda^{\overline{R}}_1=\id_{\H_1(K)}$, is bijective.
\end{dfn}

\begin{rem}\label{R11} Every quasi-complete intersection ideal is quasi-Gorenstein; see \cite[Theorem 2.5]{AHS}.
\end{rem}

\begin{lem}\label{lem11} Let $I$ be a quasi-Gorenstein ideal of $R$. Then for any finitely generated $R/I$-module $M$, one has $\Gdim_R(M)=\Gdim_{R/I}{(}M{)} +\gr I$. In particular, if $\gr I=0$, then $M$ is a totally reflexive $R$-module if and only if $M$ is a totally reflexive $R/I$-module.
\end{lem}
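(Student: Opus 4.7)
The plan is to leverage a derived-category duality coming from the quasi-Gorenstein hypothesis. Set $g = \gr I$ and $S = R/I$. Definition \ref{df1} says precisely that $\rhom_R(S, R) \simeq S[-g]$ (the complex with $S$ concentrated in cohomological degree $g$), so for any finitely generated $S$-module $M$, adjunction gives
$$
\rhom_R(M, R) \;\simeq\; \rhom_S(M, \rhom_R(S, R)) \;\simeq\; \rhom_S(M, S)[-g].
$$
Taking cohomology yields $\Ext_R^{i+g}(M, R) \cong \Ext_S^i(M, S)$ for all $i \ge 0$, and $\Ext_R^i(M, R) = 0$ for $0 \le i < g$.

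The first technical step is to transfer finiteness of $G$-dimension across $R$ and $S$. Suppose $\Gdim_S M = n < \infty$. Then the displayed isomorphism forces $\Ext_R^i(M, R) = 0$ for all $i > n+g$; iterating the duality, and using that $\Hom_S(M, S)$ also has finite $S$-$G$-dim, produces the biduality quasi-isomorphism $M \simeq \rhom_R(\rhom_R(M, R), R)$. One then checks that a sufficiently high $R$-syzygy of $M$ satisfies the three defining conditions of a totally reflexive $R$-module, giving $\Gdim_R M \le n+g < \infty$. The converse is symmetric. Once both $G$-dimensions are finite, the Auslander--Bridger formula closes the argument: $\depth_R M = \depth_S M$ (depth depends only on the module structure) and $\depth S = \depth R - g$ (Auslander--Bridger applied to $S$ over $R$, using $\Gdim_R S = g$ from Remark \ref{remqg}(1)); subtracting the identities $\Gdim_R M = \depth R - \depth_R M$ and $\Gdim_S M = \depth S - \depth_S M$ yields $\Gdim_R M = \Gdim_S M + g$. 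The ``in particular'' assertion is the specialization $g=0$.

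The main obstacle is verifying the full totally-reflexive criterion -- biduality plus Ext-vanishing for the $R$-dual, not merely Ext-vanishing of $M$ itself -- when transferring finite $G$-dimension from $S$ back to $R$. My preferred remedy is to invoke Christensen's derived-reflexivity characterization of finite $G$-dimension, since the shift-duality above manifestly preserves both Ext-boundedness and derived reflexivity. A more elementary route is induction on $g$: when $g = 0$ the functors $\Hom_R(-, R)$ and $\Hom_S(-, S)$ agree on $S$-modules, so the three conditions transfer one-for-one; the inductive step divides out an $R$-regular element of $I$ after verifying that the quasi-Gorenstein property descends to the resulting quotient.
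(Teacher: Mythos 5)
Your argument is correct, but it is a genuinely different (self-contained) route: the paper's proof is a one-line citation to Golod's change-of-rings theorem for $G$-perfect ideals (\cite[Theorem 5]{Gol} with $K=R$), which already states the additivity $\Gdim_R M=\Gdim_{R/I}M+\gr I$, noting that quasi-Gorenstein ideals are $G$-perfect by Remark~\ref{remqg}(1). What you do instead is reprove this from scratch: the quasi-Gorenstein hypothesis is $\rhom_R(S,R)\simeq S[-g]$, adjunction gives the shift-duality $\rhom_R(M,R)\simeq\rhom_S(M,S)[-g]$, and Christensen's derived-reflexivity characterization of finite $G$-dimension then transfers finiteness in both directions (since the isomorphism preserves boundedness of cohomology and, applying the adjunction once more to the complex of $S$-modules $\rhom_S(M,S)[-g]$, the biduality morphisms over $R$ and $S$ are identified). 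Once finiteness is settled, Auslander--Bridger and the depth bookkeeping $\depth_R M=\depth_S M$, $\depth S=\depth R-g$ (the latter from $\Gdim_R S=g$) give the equality. Your elementary fallback---Rees's isomorphism $\Ext^{i}_R(N,R)\cong\Ext^{i-1}_{R/(x)}(N,R/(x))$ for $x\in I$ $R$-regular, showing $I/(x)$ is quasi-Gorenstein of grade $g-1$, and induction on $g$---is also correct and is closer in spirit to what sits inside Golod's cited proof. The trade-off is that the paper outsources the work to a reference while you give a transparent argument; both are fine, and the only spot in yours that merits an explicit word is the second application of adjunction needed to identify the two biduality maps.
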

\begin{proof} {This follows from \cite[Theorem 5]{Gol} by setting $K=R$.}
\end{proof}

\begin{dfn}\label{gregular}
A ring $R$ is called {\em G-regular} if all totally reflexive $R$-modules are free.
\end{dfn}

Every regular ring is G-regular. More generally, a Cohen-Macaulay local ring with minimal multiplicity is G-regular; see Corollary \ref{Greg}.
The following proposition{,}  stated in \cite[Proposition 4.6]{greg},  provides examples of  G-regular local ring{s}. Other examples of G-regular local rings are found in \cite[Example 3.5]{AM}, \cite[Examples 5.2--5.5]{greg} and \cite[Corollary 4.8]{NS}; see also \cite[Corollary 6.6]{fiber}.

\begin{prop}\label{46}
Let $(R,\m)$ be a G-regular local ring and $x\in\m$ an $R$-regular element.
Then $R/(x)$ is G-regular if and only if $x\notin\m^2$.
\end{prop}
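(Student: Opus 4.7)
The plan is to prove the two implications separately.

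For the ``if'' direction, assume $x\notin\m^2$, let $M$ be a non-zero totally reflexive $R/(x)$-module, and show $M$ is free over $R/(x)$. Since $x$ is $R$-regular, $\pd_R(R/(x))=1$; combining this with the transitivity of Gorenstein dimension along the finite-projective-dimension ring map $R\to R/(x)$ (applied to the minimal $R/(x)$-free resolution of $M$, whose terms all have $\Gdim_R=1$) gives $\Gdim_R M\le \Gdim_{R/(x)}M+\pd_R(R/(x))=1$. Therefore a first $R$-syzygy of $M$ is totally reflexive over $R$, hence free by the G-regularity of $R$, and so $\pd_R M\le 1$. The hypothesis $xM=0$ together with $x$ being $R$-regular and $M\ne 0$ forces $\pd_R M=1$, and the standard change-of-rings identity $\pd_{R/(x)}M=\pd_R M-1$ (for modules of finite $R$-projective dimension annihilated by a non-zerodivisor) then yields $\pd_{R/(x)}M=0$, i.e., $M$ is $R/(x)$-free.

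For the ``only if'' direction I would argue contrapositively: assume $x\in\m^2$ and construct a non-free totally reflexive $R/(x)$-module, contradicting G-regularity of $R/(x)$. Writing $x=\sum_{i=1}^n a_ib_i$ with $a_i,b_i\in\m$, one obtains a pair of $n\times n$ matrices $(\phi,\psi)$ over $R$ with $\phi\psi=\psi\phi=xI_n$; modulo $(x)$ this yields a $2$-periodic acyclic complex of finitely generated free $R/(x)$-modules, and the candidate totally reflexive module is the cokernel $M$ of one of these maps. To verify that $M$ (or a non-trivial direct summand of it after cancelling any free parts) is totally reflexive over $R/(x)$ but not free, one would invoke the G-regularity of $R$ to tightly constrain $R$-syzygies so as to obstruct the collapse of the periodic complex to a split one, and to force the vanishings $\Ext^{\ge 1}_{R/(x)}(M,R/(x))=0=\Ext^{\ge 1}_{R/(x)}(M^\ast,R/(x))$ along with reflexivity of $M$.

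The main obstacle is precisely the ``only if'' direction. The ``if'' direction reduces cleanly to two off-the-shelf change-of-rings facts (for $\Gdim$ and for $\pd$), but in the contrapositive one has no Gorenstein or hypersurface hypothesis on $R$ to fall back on, so the usual matrix-factorization recipe is not automatically available; one must exploit the containment $x\in\m^2$ together with the G-regularity of $R$ (e.g.\ via Nakayama to prevent free cancellations, and via finite $\Gdim_R$-vanishing to force $\Ext$-vanishing over $R/(x)$) in order to extract the required non-free totally reflexive $R/(x)$-module.
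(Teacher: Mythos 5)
The paper does not prove this proposition; it cites it from \cite[Proposition 4.6]{greg}, so there is no internal proof to compare against. Judged on its own terms, your proposal has a genuine gap in the direction you call ``clean,'' and the direction you flag as the obstacle is actually more tractable than you suggest.

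In the ``if'' direction you never use the hypothesis $x\notin\m^2$. You correctly get $\pd_RM=1$ from the change of rings for $\Gdim$ plus G-regularity of $R$, but the step ``$\pd_{R/(x)}M=\pd_RM-1$ for modules of finite $R$-projective dimension annihilated by a non-zerodivisor'' is not a theorem: take $R=k[[u,v]]$, $x=u^2$, $M=k$; then $\pd_RM=2<\infty$, $xM=0$, yet $\pd_{R/(x)}M=\infty$. (The standard change of rings goes the other way: finiteness of $\pd_{R/(x)}M$ implies $\pd_RM=\pd_{R/(x)}M+1$.) This is exactly where $x\notin\m^2$ must enter. One repair: from a minimal $R$-presentation $0\to R^a\xrightarrow{\phi}R^b\to M\to0$, the relation $xM=0$ produces $\psi$ with $\phi\psi=x\operatorname{id}_{R^b}$ and, by injectivity of $\phi$, also $\psi\phi=x\operatorname{id}_{R^a}$. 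If all entries of $\psi$ were in $\m$, then $x\in\m^2$; so $\psi$ has a unit entry, and Gaussian elimination splits off a factor $(x)\colon R\to R$ from $\phi$, exhibiting $R/(x)$ as a summand of $M$. Iterating gives that $M$ is $R/(x)$-free. Alternatively, one can invoke Levin's theorem that $R\to R/(x)$ is large when $x\in\m\setminus\m^2$, so that $P^R_M(t)=P^{R/(x)}_M(t)(1+t)$, and then the polynomial equation $b_0+b_1t=P^{R/(x)}_M(t)(1+t)$ forces $P^{R/(x)}_M(t)$ to be constant.

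In the ``only if'' direction your worry is misplaced: the matrix-factorization recipe does \emph{not} require $R$ to be Gorenstein, a hypersurface, or even G-regular. Write $x=\sum_{i=1}^n a_ib_i$ with $a_i,b_i\in\m$, and take the Clifford/Koszul matrices $\phi,\psi$ over $R$ with $\phi\psi=\psi\phi=xI$. Because $x$ is a non-zerodivisor, the induced $2$-periodic complex of free $R/(x)$-modules is exact (the verification uses only $\phi\psi=\psi\phi=xI$ and $x$ being regular on finite free $R$-modules), and since $(\psi^T,\phi^T)$ is again a matrix factorization of $x$, the $R/(x)$-dual complex is exact for the same reason. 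Hence $M=\Coker\overline\phi$ is totally reflexive over $R/(x)$. It is nonzero and non-free because $\phi$ and $\psi$ have all entries in $\m$, so the $2$-periodic resolution is already minimal and never terminates. Thus G-regularity of $R$ plays no role in this direction at all; the obstruction you anticipate does not arise, and the statement you would need from G-regularity (that syzygies force $\Ext$-vanishing) is simply not needed.
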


\begin{dfn}  Let $R=\oplus^\infty_{i=0}R_i$ be a graded ring with $R_0=k$ a field. A graded $R$-module $M$ is called \emph{Koszul} or \emph{linear}, if $\beta^R_{i,j}(M)=0$ for all $j-i\neq r$ and some $r\in \mathbb{Z}$. More precisely, Koszul modules are those modules that have linear resolutions. The ring $R$ is said to be \emph{Koszul} if $k$ is a Koszul $R$-module. For more details, see {\cite{Frob,HI}}.
\end{dfn}

\section{Totally reflexive modules with {minimal multiplicity}}

{In this section we give the proof of our main result, Theorem \ref{Main1}. Before, we bring a lemma.}

{
\begin{lem}\label{type} Let $(R,\m,k)$ be an Artinian local ring of type $r\geq 2$. Suppose there exists a non-zero totally reflexive $R$-module $M$ such that $\m^2 M=0$. Then the following statements hold.
{\begin{enumerate}[\rm(1)]
\item There exists an exact sequence $0\rightarrow k^{br}\rightarrow M \rightarrow k^b\rightarrow 0$ where $b=\nu_R(M)$.
\item One has $I^R_R(t)=\frac{r-t}{1-rt}$.
\end{enumerate}}
\end{lem}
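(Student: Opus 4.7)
Since $\m^2M=0$, we have $\m M\subseteq\operatorname{soc}(M)$, and the natural short exact sequence $0\to\m M\to M\to M/\m M\to 0$ with $M/\m M\cong k^b$ already has the form required in part (1) once we pin down $\dim_k\m M=br$. My plan is first to prove the sharper statement $\operatorname{soc}(M)=\m M$, and analogously $\operatorname{soc}(M^*)=\m M^*$. Indeed, if a socle element $x$ lay outside $\m M$, one could extend it to a minimal generating set $\{x=x_1,x_2,\ldots,x_b\}$ of $M$ and project onto $M/(Rx_2+\cdots+Rx_b)\cong kx\cong k$, splitting the inclusion $kx\hookrightarrow M$ and exhibiting $k$ as a direct summand of $M$. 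But $k$ cannot be a direct summand of a totally reflexive module over our ring: direct summands of totally reflexive modules are totally reflexive, while $k$ totally reflexive would force $\mu_R^i(R)=\dim_k\Ext_R^i(k,R)=0$ for every $i>0$, making $R$ self-injective and hence Gorenstein, contradicting $r\ge 2$. Since $M^*$ is also nonzero, totally reflexive, and annihilated by $\m^2$ (as $\m^2f=0$ whenever $\m^2M=0$), the same argument gives $\operatorname{soc}(M^*)=\m M^*$.

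Next I compute $\dim_k\operatorname{soc}(M)$ using the reflexivity $M\cong M^{**}$ and Hom-tensor adjunction:
$$
\operatorname{soc}(M)=\Hom_R(k,M)\cong\Hom_R(k\otimes_RM^*,R)\cong\Hom_R(k^{b'},R)\cong k^{b'r},
$$
where $b':=\nu_R(M^*)$; hence $\dim_k\m M=b'r$, and symmetrically $\dim_k\m M^*=br$. I then dualize the short exact sequence by $\Hom_R(-,R)$ and invoke $\Ext_R^i(M,R)=0$ for $i\ge 1$ to extract, for $i=1$, the four-term exact sequence
$$
0\to k^{br}\to M^*\to k^{b'r^2}\to k^{b\mu_R^1(R)}\to 0.
$$
Computing $\ell_R(M^*)$ in two ways---from this sequence as $br+b'r^2-b\mu_R^1(R)$, and from the short exact sequence for $M^*$ as $\dim_k\m M^*+\nu_R(M^*)=br+b'$---yields the relation $b\mu_R^1(R)=(r^2-1)b'$. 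The symmetric computation applied to $M^*$ (using $M^{**}\cong M$) gives $b'\mu_R^1(R)=(r^2-1)b$, and multiplying these two equations forces $\mu_R^1(R)=r^2-1$ and $b'=b$. Therefore $\dim_k\m M=br$, which establishes part~(1).

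For part~(2), continue the long exact $\Ext$ sequence: for $i\ge 1$, the vanishing of $\Ext_R^i(M,R)$ and $\Ext_R^{i+1}(M,R)$ yields the isomorphism $\Ext_R^i(\m M,R)\cong\Ext_R^{i+1}(M/\m M,R)$, which translates to $br\cdot\mu_R^i(R)=b\cdot\mu_R^{i+1}(R)$. So the Bass numbers satisfy the recursion $\mu_R^{i+1}(R)=r\mu_R^i(R)$ for $i\ge 1$. With the initial data $\mu_R^0(R)=r$ and $\mu_R^1(R)=r^2-1$, summing the resulting geometric series delivers
$$
I_R^R(t)=r+\frac{(r^2-1)t}{1-rt}=\frac{r-t}{1-rt}.
$$

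The main obstacle is the socle-equality step $\operatorname{soc}(M)=\m M$, which is precisely where the non-Gorenstein hypothesis $r\ge 2$ enters the argument, via the failure of $k$ to be totally reflexive. Once that is secured, everything else reduces to bookkeeping with the two mutually symmetric length equations produced by dualizing the natural short exact sequences for $M$ and $M^*$.
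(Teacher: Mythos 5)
Your proof is correct, and it takes a genuinely different route from the paper's, which simply cites Gerko (\cite[Propositions 5.1 and 5.2]{Ger} with $K=R$) without reproducing any argument. The engine of your proof is the identity $\operatorname{soc}(M)=\m M$, obtained by showing that a socle element outside $\m M$ would split off a copy of $k$, and that $k$ cannot be totally reflexive over a non-Gorenstein ring; this is sound, including the use of $r\geq2$ to rule out $R$ Gorenstein. The socle dimension count $\dim_k\operatorname{soc}(M)=r\cdot\nu_R(M^*)$ via reflexivity and Hom-tensor adjunction is exactly the right trick, and the two symmetric length equations $b\mu^1_R(R)=b'(r^2-1)$ and $b'\mu^1_R(R)=b(r^2-1)$ then cleanly force $\mu^1_R(R)=r^2-1$ and $b=b'$. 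The recursion $\mu^{i+1}_R(R)=r\,\mu^i_R(R)$ for $i\geq1$ extracted from the long exact $\Ext$-sequence against $0\to k^{br}\to M\to k^b\to0$ is also correct, and the geometric series assembles to $\frac{r-t}{1-rt}$ as required. What your argument buys over the paper's citation is self-containedness and transparency about where $r\geq2$ is actually used (namely, to ensure $r^2-1\neq0$ and to exclude $k$ as a summand); it also makes plain that the key structural fact $\operatorname{soc}(M)=\m M$ is equivalent, for such an $M$, to $M$ having no $k$-summands, which is the content hiding behind Gerko's propositions. One minor point worth spelling out if you include this in writing: the splitting step needs $Rx\cap(Rx_2+\cdots+Rx_b)=0$, which follows from minimality of the generating set because $Rx\cong k$ is simple; this is implicit in your phrasing but should be said.
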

\begin{proof} {It follows from \cite[Propositions 5.1 and 5.2]{Ger} by setting $K=R$.}
\end{proof}}

\begin{proof}[Proof of Theorem \ref{Main1}]
(1) First we show that $I^R_{R}(t)=\frac{t^d(r-t)}{1-rt}$. {Lemma \ref{type}} settles the case $d=0$.
In the case where $d{>0}$,{ $\q$ is generated by a regular sequence of length $d$. Therefore} there is an isomorphism {$\Ext^i_{R/\q}(k,R/\q)\cong \Ext^{i+d}_R(k,R)$}, which gives $\mu^i(R/\q)=\mu^{i+d}(R)$ for $i\in\Z$. Hence $$I^R_{R}(t)=t^dI^{R/\q}_{{R/\q}}(t)=t^d\cdot\frac{r-t}{1-rt}.$$

Set{$\overline{(-)}=(-)\otimes_RR/\q$ and let} $\nu_R(M)={b}$.
Replacing $R$ with its completion, we may assume that $R$ admits the canonical module $\omega$.
{As $\overline{M}$ is a totally reflexive $\overline{R}$-module,} $\Gdim_R({\overline{M}}) < \infty$ {by Lemma \ref{lem11}}. Hence we get $\Tor^R_{>0}({\overline{M}},\omega)=0$ by \cite[Theorems (3.1.10) and (3.4.6)]{C}.
Consider the exact sequence $0\rightarrow \Omega \omega \rightarrow F \rightarrow \omega\rightarrow 0$ where $F$ is a free $R$-module and $\Omega \omega \subseteq \m F$. As $\Tor^R_1({\overline{M}},\omega)=0$, applying ${\overline{M}}\otimes_R-$, we get an injection ${\overline{M}}\otimes_R\Omega \omega \hookrightarrow {\overline{M}}\otimes_R F$ whose image is a subset of $\m({\overline{M}}\otimes_RF)$. Since $\m^2({\overline{M}})=0$, it follows that ${\overline{M}}\otimes_R\Omega \omega$ is a $k$-vector space, that is, ${\overline{M}}\otimes_R\Omega \omega\cong k^{{b}s}$, where $s=\nu_R(\Omega\omega)$. Putting this together with $\Tor^R_{>0}({\overline{M}},\Omega\omega)=0$, one gets
$$
{b}s\,P^R_k(t)=P^R_{{\overline{M}}\otimes_R\Omega \omega}(t)=P^R_{{\overline{M}}}(t)\,P^R_{\Omega \omega}(t);
$$
see \cite[Theorem (A.7.6)]{C}.
Since $M$ is maximal Cohen-Macaulay {R-module,}
$$
P^R_{\overline{M}}(t)=P^R_M(t)(1+t)^d.
$$
{We have
$$
P^{{R}}_{\Omega\omega}(t)=\frac{1}{t}(P^{{R}}_\omega(t)-r)=\frac{1}{t}\left(\frac{1}{t^d}I^R_{{R}}(t)-r \right)=\frac{r^2-1}{1-rt},
$$
which especially says $s=r^2-1$.}
The result now follows.

{If $M'\neq 0$ is another totally reflexive $R$-module such that $\m^2 \subseteq \q M'$ then by the last argument $\displaystyle\frac{P^R_M(t)(1+t)^d}{\nu_R(M)(1-rt)} =P^R_k(t) = \frac{ P^R_{M'}(t)(1+t)^d}{\nu_R(M')(1-rt)}$. Thus $\displaystyle\frac{P^R_M(t)}{\nu_R(M)}=\frac{P^R_{M'}(t)}{\nu_R(M')}$.}

(2) {Note that} $\overline M$ and $\overline N$ are totally reflexive $\overline R$-modules with $P^{\overline{R}}_{\overline{M}}(t)=P^R_M(t)$ and $P^{\overline{R}}_{\overline{N}}(t)=P^R_N(t)$.
It follows from \cite[Proposition 3.3.3(a)]{BH} that $\overline{N^*}\cong\Hom_{\overline R}(\overline N,\overline R)$.
Thus, replacing $R$ with $R/\q$, we may assume $d=0$.
The same argument as in part (1) shows that
$$
P^R_L(t)= \frac{r^2-1}{1-rt}P^R_N(t),
$$
where $L:=N\otimes_R\Omega\omega$.
It is easy to observe by induction on $\ell_R(L)$ that $P^R_L(t)\preccurlyeq \ell_R(L)\cdot P^R_k(t)$.
Using (1), we obtain
$$
\frac{P^R_N(t)}{1-rt} \preccurlyeq \frac{C\cdot P^R_M(t)}{1-rt},\text{ where }C=\frac{\ell_R(L)}{(r^2-1)\nu_R(M)}.
$$
There is an exact sequence $0\to\Omega\omega\to R^{\oplus r}\to\omega\to0$.
Taking the Matlis dual $(-)^\vee=\Hom_R(-,\omega)$, we get an exact sequence $0\to R\to\omega^{\oplus r}\to(\Omega\omega)^\vee\to0$.
Applying the functor $\Hom_R(N,-)$ to this and noting that $N$ is totally reflexive, we obtain an exact sequence $0\to N^*\to(N^\vee)^{\oplus r}\to\Hom_R(N,(\Omega\omega)^\vee)\to0$.
It follows that
$$
\ell_R(L)=\ell_R(L^\vee)=\ell_R(\Hom_R(N,(\Omega\omega)^\vee))=\ell_R((N^\vee)^{\oplus r})-\ell_R(N^*)=r\cdot\ell_R(N)-\ell_R(N^*),
$$
which yields the equality
$$
C=\frac{r\cdot\ell_R(N)-\ell_R(N^*)}{(r^2-1)\nu_R(M)}.
$$
Thus (2) follows.
\end{proof}

\begin{cor}\label{fincxr}
Keep the notation of Theorem \ref{Main1}.
\begin{enumerate}[\rm(1)]
\item
If $M$ has finite complexity, then there exists a real number $U>1$ such that
$$
P^R_k(t)\preccurlyeq \frac{U}{1-rt}.
$$
In particular, one has $\curv_R(k)=r$.
\item
If $\q$ is a reduction of $\m$, then $C$ is described in terms of multiplicities:
$$
C=\frac{r\cdot\e(N)-\e(N^*)}{(r^2-1)\nu_R(M)}.
$$
\item
Assume $d=0$ and $R$ is positively graded over a field, and $N$ is graded.
Then
$$
C=\frac{\ell_R(N)}{(r+1)\nu_R(M)}.
$$
\end{enumerate}
\end{cor}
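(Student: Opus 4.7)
The plan is to treat parts (1), (2), and (3) in order, each leveraging the identity
$P^R_k(t)\,\nu_R(M)(1-rt) = P^R_M(t)(1+t)^d$
from Theorem~\ref{Main1}(1).

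For (1), set $Q(t) := P^R_M(t)(1+t)^d/\nu_R(M) = \sum_{i\ge 0} q_i t^i$. The coefficients $q_i$ are non-negative with $q_0 = 1$, and $\cx_R(M)<\infty$ forces $\beta^R_i(M)$ to grow at most polynomially, so $Q(t)$ has radius of convergence at least $1$. Because $R$ is non-Gorenstein, $r\ge 2$, so $1/r$ lies strictly inside this disk; set $U_0 := Q(1/r) = \sum_i q_i r^{-i}$, a finite real at least $1$, and choose any $U>\max(U_0,1)$. Multiplying out the relation yields $\beta^R_n(k) = \sum_{i=0}^n q_i r^{n-i} \le U_0 r^n \le U r^n$, hence $P^R_k(t) \preccurlyeq U/(1-rt)$. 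The curvature equality is then immediate: this inequality gives $\curv_R(k)\le r$, while the trivial lower bound $\beta^R_n(k)\ge q_0 r^n = r^n$ gives $\curv_R(k)\ge r$.

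Part (2) is a direct substitution. Every totally reflexive $R$-module is maximal Cohen-Macaulay over the Cohen-Macaulay ring $R$, so both $N$ and $N^*$ are MCM. For an MCM module $X$ and a parameter ideal $\q$ one has $\ell_R(X/\q X)=\e_\q(X)$, and $\q$ being a reduction of $\m$ gives $\e_\q(X)=\e(X)$. Applied to $X=N$ and $X=N^*$, this converts the expression in Theorem~\ref{Main1}(2) into the stated formula.

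For (3), $d=0$ makes $\q=0$, so the formula of Theorem~\ref{Main1}(2) reads $C=(r\ell_R(N)-\ell_R(N^*))/((r^2-1)\nu_R(M))$; since $(r^2-1)=(r-1)(r+1)$, the target $C=\ell_R(N)/((r+1)\nu_R(M))$ is equivalent to $\ell_R(N^*)=\ell_R(N)$. Comparing the two expressions for $\ell_R(L)$ obtained with $L=N\otimes\Omega\omega$ in the proof of Theorem~\ref{Main1}(2) already yields $\ell_R(N\otimes\omega)=\ell_R(N^*)$, so it suffices to establish $\ell_R(N\otimes\omega)=\ell_R(N)$ in the positively-graded Artinian case over a field $k$. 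There $\omega_R\cong\Hom_k(R,k)$ up to a degree shift, so $\ell_R(\omega_R)=\ell_R(R)$. Pick a graded minimal free resolution $F_\bullet\to\omega_R$ and write $H_{F_i}(t)=b_i(t)H_R(t)$ for the corresponding degree polynomials. Total reflexivity of $N$ gives $\Tor^R_{>0}(N,\omega_R)=0$, so tensoring the truncation $0\to\Omega^n\omega_R\to F_{n-1}\to\cdots\to F_0\to\omega_R\to 0$ with $N$ remains exact, and the resulting Euler characteristics at the level of Hilbert series read $H_{N\otimes\omega}(t)=H_N(t)\sum_{i=0}^{n-1}(-1)^i b_i(t)+(-1)^n H_{N\otimes\Omega^n\omega_R}(t)$ and similarly for $\omega_R$ itself. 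Minimality forces the internal degrees of $\Omega^n\omega_R$ to tend to infinity, so the tails vanish coefficient-wise as $n\to\infty$, producing the polynomial identity $H_{N\otimes\omega}(t)\cdot H_R(t)=H_N(t)\cdot H_{\omega_R}(t)$. Evaluating at $t=1$ yields $\ell_R(N\otimes\omega)=\ell_R(N)\cdot\ell_R(\omega_R)/\ell_R(R)=\ell_R(N)$, completing (3).

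The main obstacle is (3): parts (1) and (2) are essentially formal manipulations of the identity in Theorem~\ref{Main1}, but (3) requires extracting the Hilbert-series identity $H_{N\otimes\omega}\cdot H_R=H_N\cdot H_{\omega_R}$ from the infinite graded minimal free resolution of $\omega_R$, and the delicate point is justifying the coefficient-wise limit that promotes the formal alternating sum to a genuine equality of Laurent polynomials.
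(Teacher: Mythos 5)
Your proposal is correct in all three parts, and the overall shape matches the paper's, but the route through (3) is genuinely different, which is worth pointing out.

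For (1), you and the paper prove the same bound $\beta^R_n(k)\le U r^n$ by expanding $P^R_k(t)=Q(t)/(1-rt)$ with $Q(t)=P^R_M(t)(1+t)^d/\nu_R(M)$; the only cosmetic difference is that the paper invokes d'Alembert's ratio test to see that $\sum h^{c-1}/r^h$ converges, whereas you argue directly that finite complexity gives $Q$ radius of convergence $\ge 1$ so $Q(1/r)$ is a finite real. Both work; your version is slightly cleaner. The curvature bounds are handled identically. For (2), both proofs just substitute $\e(X)=\e_\q(X)=\ell_R(X/\q X)$ for the maximal Cohen--Macaulay modules $X=N$ and $X=N^*$, which is what the paper does via \cite[Theorems 4.7.6 and 4.7.10]{BH}.

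For (3), the paper simply cites \cite[Lemma 6.6]{extd} for the equality $\ell_R(N)=\ell_R(N^*)$; you reprove this from scratch. Your argument---extract $\ell_R(N\otimes\omega)=\ell_R(N^*)$ from the two computations of $\ell_R(L)$ in the proof of Theorem~\ref{Main1}(2), then establish $\ell_R(N\otimes\omega)=\ell_R(N)$ via the Hilbert-series identity $H_{N\otimes\omega}\cdot H_R=H_N\cdot H_{\omega_R}$---is sound. The coefficient-wise limit you flag as delicate is fine: in a minimal graded free resolution over a positively graded $k$-algebra the minimal degree of a generator of $F_n$ is strictly increasing in $n$, so for any fixed internal degree $m$ the coefficients of $b_n(t)$ and of $H_{N\otimes\Omega^n\omega_R}(t)$ in degree $m$ vanish for $n\gg0$, and the alternating sum $B(t)=\sum(-1)^ib_i(t)$ is a well-defined formal power series; dimension-shifting the hypothesis $\Tor^R_{>0}(N,\omega)=0$ gives exactness of the tensored truncations. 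The trade-off: the paper's proof of (3) is a one-line citation, whereas yours is self-contained but longer; yours has the advantage of making visible why the Tor-vanishing hypothesis is the real engine behind the length equality.
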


\begin{proof}
(1) {Set} $c=\cx_R(M)$.
Then $\cx_R(M/\q M)=c$ by \cite[Proposition 4.2.4(5)]{Av}, and there exist a real number $u>0$ and an integer $e>0$ such that $\beta^R_i(M/\q M)\le u\, i^{c-1}$ for all integers $i\ge e$.
Replacing $u$ with $\max_{1\le i\le e-1}\{u,\,i\cdot\beta_i^R(M/\q M)\}$, we may assume $e=1$.
Theorem \ref{Main1}(1) shows $P^R_k(t)=\frac{P_M^R(t)(1+t)^d}{\nu_R(M)(1-rt)}=\frac{P^R_{M/\q M}(t)}{\nu_R(M)(1-rt)}$, which implies
$$
\frac{\beta^R_i(k)}{r^i}\le\frac{1}{\nu_R(M)}\left(\nu_R(M)+u\sum_{h=1}^i\frac{h^{c-1}}{r^h}\right)
$$
for all $i\ge1$.
Since $c>0$ and $r>1$, by d'Alembert's ratio test, the series
$\sum_{h=1}^\infty\frac{h^{c-1}}{r^h}$ converges {to $T$}.
Setting $U=1+\frac{u\, T}{\nu_R(M)}$, we get $\beta^R_i(k) \le Ur^i$
for all $i\ge0$.
Therefore $P^R_k(t)\preccurlyeq \frac{U}{1-rt}$, and
$$
\curv_R(k)=\limsup_{i\to\infty}\sqrt[i]{\beta_i^R(k)}\le\left(\limsup_{i\to\infty}\sqrt[i]{U}\right)\cdot r=\left(\lim_{i\to\infty}\sqrt[i]{U}\right)\cdot r=1\cdot r=r.
$$
On the other hand, since {$\frac{P_M^R(t)}{\nu_R(M)}=\frac{P_M^R(t)}{\beta^R_0(M)}=1+(\frac{\beta^R_1(M)}{\beta^R_0(M)})t + \cdots \succcurlyeq 1$ and $(1+t)^d\succcurlyeq 1$, we have} $\frac{P_M^R(t)(1+t)^d}{\nu_R(M)}\succcurlyeq 1$. {Therefore} one has $P^R_k(t)=\frac{P_M^R(t)(1+t)^d}{\nu_R(M)(1-rt)}\succcurlyeq \frac{1}{1-rt}$.  {Hence} $\beta^R_n(k)\ge r^n$ {and so} $\curv_R(k) \geq r$.

(2) It follows from \cite[Theorems 4.7.6 and 4.7.10]{BH} that $\e(N)=\ell_R(N/\q N)$ and $\e(N^*)=\ell_R(N^*/\q N^*)$.
Thus the assertion follows from Theorem \ref{Main1}(2).

(3) Since $\ell_R(N)=\ell_R(N^*)$ by \cite[Lemma 6.6]{extd}, Theorem \ref{Main1}(2) implies the assertion.
\end{proof}

\begin{rem}\label{Sean}
A similar argument to the proof of Theorem \ref{Main1}(1) actually shows a more general statement:

Let $R$ be a $d$-dimensional Cohen-Macaulay local ring.
Let $C$ be a semidualizing $R$-module of type $r{\ne1}$.
{Let $M$ be a non-zero totally $C$-reflexive $R$-module with $\m^2M\subseteq\q M$ for some parameter ideal $\q$ of $R$.}
Then
$$
I_R^C(t)=\frac{t^d(r-t)}{1-rt},{\text{ and }P_k^R(t)=\frac{P_M^R(t)(1+t)^d}{\nu_R(M)(1-rt)}}.
$$
\end{rem}

The following result gives a sufficient condition for the assumption $\m^2M \subseteq \q M$ used in Theorem \ref{Main1}, which no longer involves a parameter ideal $\q$.

\begin{prop}\label{1854}
Let $(R,\m,k)$ be a $d$-dimensional Cohen-Macaulay local ring with $k$ infinite.
Let $M$ be a maximal Cohen-Macaulay $R$-module satisfying
\begin{equation}\label{mme}
\e(M)=\nu_R(\m M)-(d-1)\nu_R(M),
\end{equation}
Then there exists a parameter ideal $\q$ of $R$ with $\m^2M \subseteq \q M$.
\end{prop}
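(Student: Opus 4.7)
The plan is to exploit the standard characterization of the multiplicity $\e(M)$ for a maximal Cohen-Macaulay module, and compare it with a length computation using a two-step filtration between $\q M$ and $\m M$.

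Since $k$ is infinite, I would first choose $\q=(x_1,\ldots,x_d)$ to be a parameter ideal generated by a minimal reduction of $\m$ with respect to $M$; because $M$ is maximal Cohen–Macaulay over the Cohen–Macaulay ring $R$, the $x_i$ form an $M$-regular sequence, and the well-known equality
$$
\e(M)=\ell_R(M/\q M)
$$
holds. The goal is then to show that the hypothesis \eqref{mme} forces $\m^2 M\subseteq\q M$, which by the computation above is the same as $\ell_R((\m^2M+\q M)/\q M)=0$.

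For this, I would analyze the filtration $M\supseteq\m M\supseteq\m^2 M+\q M\supseteq\q M$, which gives
$$
\ell_R(M/\q M)=\nu_R(M)+\ell_R(\m M/(\m^2 M+\q M))+\ell_R((\m^2 M+\q M)/\q M),
$$
together with the filtration $\m M\supseteq\m^2 M+\q M\supseteq\m^2 M$ yielding
$$
\nu_R(\m M)=\ell_R(\m M/\m^2M)=\ell_R(\m M/(\m^2 M+\q M))+\ell_R((\m^2 M+\q M)/\m^2 M).
$$
The middle piece $(\m^2M+\q M)/\m^2 M\cong \q M/(\q M\cap\m^2 M)$ is a quotient of $\q M/\m\q M$ (since $\m\q M\subseteq\m^2M$), and the natural surjection $(\q/\m\q)\otimes_k(M/\m M)\twoheadrightarrow \q M/\m\q M$ forces the key estimate
$$
\ell_R((\m^2M+\q M)/\m^2M)\le \nu_R(\q M)\le d\,\nu_R(M).
$$

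Combining these identities gives
$$
\e(M)\ge \nu_R(M)+\bigl(\nu_R(\m M)-d\,\nu_R(M)\bigr)+\ell_R((\m^2M+\q M)/\q M)=\nu_R(\m M)-(d-1)\nu_R(M)+\ell_R((\m^2M+\q M)/\q M).
$$
Invoking the hypothesis $\e(M)=\nu_R(\m M)-(d-1)\nu_R(M)$ then forces $\ell_R((\m^2M+\q M)/\q M)=0$, i.e.\ $\m^2 M\subseteq \q M$, as desired. The only delicate point I anticipate is justifying $\e(M)=\ell_R(M/\q M)$ for a \emph{minimal reduction of $\m$ with respect to the module $M$}; this uses that such reductions exist when $k$ is infinite and that $\q$ may simultaneously be chosen to be a parameter ideal, so that the $M$-regular sequence property is available—everything else is bookkeeping with lengths.
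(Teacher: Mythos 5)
Your proof is correct and rests on the same basic strategy as the paper's -- compare $\e(M)=\ell_R(M/\q M)$ against a length computation along a filtration involving $\m M$, $\m^2 M$ and $\q M$ -- but the filtration you use and the key estimate are different. The paper works with the chain $\q\m M\subseteq\{\m^2 M,\ \q M\}\subseteq\m M\subseteq M$ and proves the \emph{exact equality} $\ell_R(\q M/\q\m M)=d\,\nu_R(M)$ by running the Koszul complex of $\q$ on the maximal Cohen--Macaulay module $M$ and tensoring down to $k$; this yields $\ell_R(\m^2 M/\q\m M)=\e(M)+(d-1)\nu_R(M)-\nu_R(\m M)$, which the hypothesis forces to be zero, giving the slightly stronger conclusion $\m^2 M=\q\m M$. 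You instead interpose the sum $\m^2M+\q M$ between $\q M$ and $\m M$, and between $\m^2 M$ and $\m M$, and replace the Koszul-complex equality by the elementary bound $\ell_R\bigl((\m^2M+\q M)/\m^2M\bigr)\le\nu_R(\q M)\le d\,\nu_R(M)$; the resulting inequality $\e(M)\ge\nu_R(\m M)-(d-1)\nu_R(M)+\ell_R\bigl((\m^2M+\q M)/\q M\bigr)$ collapses to $\m^2 M\subseteq\q M$ once the hypothesis is imposed. Your route is a bit more self-contained (no Koszul homology needed), at the cost of not establishing the equality $\m^2 M=\q\m M$; both proofs in passing give the general inequality $\e(M)\ge\nu_R(\m M)-(d-1)\nu_R(M)$, which the paper records as a remark after the proposition. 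The remaining delicate point you flag -- choosing $\q$ to be simultaneously a parameter ideal and a reduction of $\m$ so that $\e(M)=\ell_R(M/\q M)$ -- is handled exactly as you suggest (reduction of $\m$, hence parameter ideal, hence $M$-regular sequence since $M$ is MCM, citing \cite[Corollary 4.6.10 and Lemma 4.6.5]{BH}).
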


\begin{proof}
Since $k$ is an infinite field, one can choose a reduction $\q$ of $\m$ which is a parameter ideal of $R$; see \cite[Corollary 4.6.10]{BH}.
Consider the chain
$$
\q\m M \subseteq
\left\{
\begin{matrix}
\m^2 M\\
\q M
\end{matrix}
\right\}
\subseteq \m M \subseteq M
$$
of submodules of $M$.
We claim that $\ell_R(\q M/\q\m M)=d\cdot\nu(M)$.
In fact, as $\q$ is generated by a regular sequence on $R$ and $M$, its Koszul complex on $M$ makes an exact sequence
$$
M^{\binom{d}{2}} \xrightarrow{A} M^d \to M \to M/\q M \to 0,
$$
where $A$ is a $d\times \binom{d}{2}$ matrix with entries in $\q$.
Tensoring with $k$ the induced exact sequence $M^{\binom{d}{2}} \xrightarrow{A} M^d \to \q M \to0$ shows the claim.
Since $\ell_R(M/\q M)=\e(M)$ by \cite[Lemma 4.6.5]{BH}, the above chain of inclusions gives rise to:
$$
\ell_R(\m^2M/\q\m M)=\e(M)+(d-1)\nu_R(M)-\nu_R(\m M).
$$
Equality \eqref{mme} is nothing but the right-hand side is zero, which implies $\m^2M=\q\m M\subseteq\q M$.
\end{proof}

Recall that a Cohen-Macaulay local ring $R$ is said to have {\em minimal multiplicity} if \eqref{mme} holds for $M=R$; see \cite[Exercise 4.6.14]{BH}.
The proof of Proposition \ref{1854} shows that {if $k$ is infinite then} for a maximal Cohen-Macaulay $R$-module $M$ one has $\e(M)\geq \nu_R(\m M)-(d-1)\nu_R(M)$.
{In fact, the last inequality holds even if $k$ is a finite field. To see that, one can use the trick of passing to the flat extension $R[w]_{\m[w]}$ where $w$ is an indeterminate; see \cite[page 114, Remark]{M} for more details.}
Therefore, it makes sense to give the following definition.
\begin{dfn}\label{minMult} Let $(R,\m)$ be a Cohen-Macaulay local ring of dimension $d$. We say that a maximal Cohen-Macaulay $R$-module $M$ has \emph{minimal multiplicity} if $\e(M)= \nu_R(\m M)-(d-1)\nu_R(M)$.
\end{dfn}
Theorem \ref{Main1} and Proposition \ref{1854} immediately recover the following result given in \cite[Corollary 2.5]{Y} (see also \cite[Examples 3.5(2)]{AM}).

\begin{cor}\label{Greg}
Let $(R,\m,k)$ be a Cohen-Macaulay non-Gorenstein local ring with minimal multiplicity.
Then $R$ is G-regular.
\end{cor}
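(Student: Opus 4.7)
The plan is to exploit the rigidity clause in Theorem~\ref{Main1}(1): for a fixed parameter ideal $\q$, the ratio $P^R_M(t)/\nu_R(M)$ takes the same value on every nonzero totally reflexive $R$-module $M$ satisfying $\m^2M\subseteq\q M$. Since $R$ is itself such a module (once the right $\q$ has been produced), this common value is forced to equal $1$, and hence every nonzero totally reflexive module must be free.

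To set this up, I would first arrange a parameter ideal $\q$ with $\m^2\subseteq\q$. This is exactly the conclusion of Proposition~\ref{1854} applied with $M=R$, since the minimal multiplicity hypothesis on $R$ is precisely identity \eqref{mme} for $M=R$. Proposition~\ref{1854} requires an infinite residue field; if $k$ is finite I would replace $R$ with the faithfully flat local extension $R'=R[w]_{\m R[w]}$, which preserves Cohen-Macaulayness, type (so non-Gorensteinness), and minimal multiplicity. Since totally reflexive modules are preserved under faithfully flat base change and Betti numbers satisfy $\beta^R_i(M)=\beta^{R'}_i(M\otimes_RR')$, establishing G-regularity of $R'$ would imply it for $R$.

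With such a $\q$ in hand, every finitely generated $R$-module $X$ automatically satisfies $\m^2X\subseteq\q X$. Let $N$ be any nonzero totally reflexive $R$-module. Then both $M=R$ and $M'=N$ fulfill the hypothesis of Theorem~\ref{Main1}, so the ``In particular'' part of its statement~(1) yields
$$
\frac{P^R_N(t)}{\nu_R(N)}=\frac{P^R_R(t)}{\nu_R(R)}=1,
$$
whence $\beta^R_i(N)=0$ for every $i\ge1$ and $N$ is free. As $N$ was arbitrary, $R$ is G-regular.

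There is no real obstacle here: the heavy machinery is already encoded in Theorem~\ref{Main1} and Proposition~\ref{1854}, and the proof reduces to recognizing that testing the rigidity statement against the base ring $M=R$ does all the work. The only genuinely technical item is the routine flat-extension argument needed to secure an infinite residue field.
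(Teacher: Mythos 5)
Your proposal is correct and follows essentially the same route as the paper's own proof: pass to the faithfully flat extension $R[w]_{\m[w]}$ to secure an infinite residue field, invoke Proposition~\ref{1854} with $M=R$ to produce a parameter ideal $\q$ with $\m^2\subseteq\q$, and then apply the rigidity clause of Theorem~\ref{Main1}(1) comparing any nonzero totally reflexive $N$ against $M=R$ to conclude $P^R_N(t)=\nu_R(N)$, hence $N$ is free.
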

\begin{proof} Replacing $R$ with the faithfully flat extension $R[w]_{\m[w]}$ with $w$ an indeterminate, we may assume that $k$ is infinite. Then by Proposition \ref{1854} there is a parameter ideal $\q$ such that $\m^2\subseteq \q$. Therefore if $M$ is any non-zero totally reflexive module, then one has $\m^2M\subseteq \q M$. Now, Theorem \ref{Main1}({1}) implies that $\frac{P^R_M(t)}{\nu_R(M)}=\frac{P^R_R(t)}{\nu_R(R)}=1$, i.e. $M$ is a free $R$-module.
\end{proof}

In \cite[Question 6.6]{T} Takahashi asked if a local ring is Gorenstein when a syzygy of the residue field {(with respect to a minimal free resolution)} has a non-zero direct summand of finite G-dimension. As another application of Theorem \ref{Main1} we prove the following result {which specifically says that \cite[Question 6.6]{T} has affirmative answer over Artinian short local rings}.

\begin{cor}
Let $(R,\m,k)$ be a Cohen-Macaulay non-Gorenstein local ring.
Suppose that there exists a non-zero totally reflexive $R$-module $M$ with minimal multiplicity.
If $\cx_R(M)<\infty$, then for each $n\geq0$, {the $n$th syzygy} $\Omega^nk$ {of the $R$-module $k$} does not have a non-zero direct summand of finite G-dimension.
\end{cor}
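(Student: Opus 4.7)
The plan is to use Corollary~\ref{fincxr}(1) and Theorem~\ref{Main1}(2) to constrain the Betti numbers of any hypothetical nonzero totally reflexive direct summand of $\Omega^n k$, and then derive a contradiction with the structural fact that such a summand is forced to be free.

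Passing to the faithfully flat extension $R \to R[w]_{\m[w]}$ as in the proof of Corollary~\ref{Greg}, I may assume that the residue field is infinite; then Proposition~\ref{1854} produces a parameter ideal $\q$ of $R$ with $\m^2 M \subseteq \q M$, so Theorem~\ref{Main1} applies to $M$. Combined with $\cx_R(M) < \infty$ and the type $r \geq 2$ of $R$, Corollary~\ref{fincxr}(1) gives $\curv_R(k) = r$, and in particular the series $\sum_{j} \beta^R_j(M)/r^j$ converges.

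Suppose for contradiction that for some $n \geq 0$ the syzygy $\Omega^n k$ has a nonzero direct summand $X$ with $\Gdim_R(X) < \infty$. Since in a minimal free resolution the decomposition $\Omega^n k = X \oplus Y$ persists as $\Omega^{n+m}k = \Omega^m X \oplus \Omega^m Y$ for every $m \geq 0$, I would choose $m \geq \Gdim_R(X)$ large enough so that $\Omega^{n+m}k$ is maximal Cohen-Macaulay; then $\Gdim_R(\Omega^m X) = \depth R - \depth(\Omega^m X) = 0$ by the Auslander--Bridger formula, showing that $\Omega^m X$ is totally reflexive. After replacing $X$ by $\Omega^m X$ and $n$ by $n+m$, I may therefore assume $X$ itself is a nonzero totally reflexive direct summand of $\Omega^n k$.

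Applying Theorem~\ref{Main1}(2) with $N = X$, the inequality $\frac{P_X^R(t)}{1-rt} \preccurlyeq \frac{C\cdot P_M^R(t)}{1-rt}$, compared coefficient by coefficient and divided by $r^n$, yields $\sum_{j=0}^n \beta^R_j(X)/r^j \leq C \sum_{j=0}^n \beta^R_j(M)/r^j$ for every $n$. Letting $n \to \infty$ and using the convergence from the setup gives $\sum_{j} \beta^R_j(X)/r^j < \infty$. The main obstacle, and the crux of the argument, is to combine this summability with the fact that $X$ is a nonzero direct summand of $\Omega^n k$ carrying no free summand (by minimality of the resolution), together with the asymptotic $\beta^R_{n+j}(k)/r^{n+j} \to L > 0$ read off of the explicit form $P_k^R(t) = \frac{P_M^R(t)(1+t)^d}{\nu_R(M)(1-rt)}$ from Theorem~\ref{Main1}(1), in order to force all Betti numbers of $X$ to vanish. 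I expect this last step to use the same inequality applied to the totally reflexive dual $X^* = \Hom_R(X,R)$ and to each $\Omega^j X$ (each a totally reflexive summand of $\Omega^{n+j}k$), yielding sharp enough constraints that $X$ must be free; since $X$ has no free summand, $X = 0$, contradicting our assumption.
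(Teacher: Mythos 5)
Your setup mirrors the paper's: pass to $R[w]_{\m[w]}$ to assume $k$ infinite, invoke Proposition~\ref{1854} to produce $\q$ with $\m^2M\subseteq\q M$, replace $X$ by a high enough syzygy so that $X$ is totally reflexive, and apply Theorem~\ref{Main1}(2) to conclude that $\sum_j \beta^R_j(X)/r^j$ converges. All of this is correct and matches the paper.

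The gap is in the step you label ``the crux,'' and it is a genuine one, not just a detail to be filled. From the decomposition $\Omega^n k\cong X\oplus Y$ one only gets the inequality $\beta^R_j(X)\le\beta^R_{n+j}(k)$: the Betti numbers of $X$ are \emph{at most} those of $k$, shifted. This is the wrong direction. Summability of $\beta^R_j(X)/r^j$, nonvanishing of $X$, absence of free summands, and the asymptotic $\beta^R_i(k)/r^i\to L>0$ are jointly compatible with $X$ having, say, bounded Betti numbers --- nothing in your argument rules this out, and replacing $X$ with $X^*$ or $\Omega^j X$ gives no new information because those modules satisfy exactly the same one-sided bounds. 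What is needed, and what the paper uses, is an inequality pointing the \emph{other} way: since the induced map $\Hom_R(X,k)\to\Hom_R(\Omega^n k,k)$ is nonzero, the proof of Avramov's theorem on modules with extremal resolutions (\cite[Theorem 8]{Av2}) yields a power series $W(t)=\frac{(1+t)^{\alpha_1}\cdots(1+t^{2n-1})^{\alpha_n}}{(1-t^2)^{\beta_1}\cdots(1-t^{2n})^{\beta_n}}$ with $W(t)\,P^R_X(t)\succcurlyeq P^R_k(t)$. Combining this with Theorem~\ref{Main1}(2) and the polynomial growth of the coefficients of $W(t)$ and $P^R_M(t)$ gives $P^R_k(t)\preccurlyeq\frac{D}{1-rt}$ for a constant $D$; since Theorem~\ref{Main1}(1) also forces $P^R_k(t)\succcurlyeq\frac{1}{(1-rt)^2}$, comparing coefficients gives $i\,r^i\le D\,r^i$, which is absurd. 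Without the Avramov lower bound on $P^R_k$ in terms of $P^R_X$, I do not see how to finish along the lines you sketch.
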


\begin{proof}
 {Passing to} the faithfully flat extension $R[w]_{\m [w]}$ with $w$ an indeterminate, we may assume that $k$ is infinite.
Proposition \ref{1854} implies $\m^2M\subseteq\q M$ for some parameter ideal $\q$ of $R$.
Assume that there is an integer $n\geq 0$ such that $\Omega^nk\cong X\oplus Y$, where $X$ is a non-zero $R$-module with $\Gdim_RX<\infty$.
Replacing $n$ with a bigger integer if necessary, we may assume that $X$ is totally reflexive.

{Note that as} the map $\Hom_R(X,k)\rightarrow \Hom_R(\Omega^nk,k)$ induced from the projection $\Omega^nk\to X$ is non-zero, by the proof of \cite[Theorem 8]{Av2} there exists a power series $W(t)=\frac{(1+t)^{\alpha_1}\cdots (1+t^{2n-1})^{\alpha_n}}{(1-t^2)^{\beta_1}\cdots (1-t^{2n})^{\beta_n}}$ such that $W(t)P^R_{X}(t)\succcurlyeq P^R_k(t)$, where  $\alpha_1,\cdots,\alpha_n$, and $\beta_1,\cdots,\beta_n$ are positive integers. {Let $r$ be the type of $R$.} By Theorem \ref{Main1}(2) we have $\frac{P^R_{X}(t)}{1-rt} \preccurlyeq \frac{C\cdot P^R_M(t)}{1-rt}$ with $C$ constant. Note that multiplication by a power series with positive coefficients preserves an inequality of formal power series. Therefore by multiplying $W(t)\succcurlyeq 1$, we get
\begin{equation}\label{12345}
\frac{P^R_k(t)}{1-rt}\preccurlyeq\frac{P^R_{X}(t)W(t)}{1-rt} \preccurlyeq \frac{C\cdot P^R_M(t)W(t)}{1-rt}.
\end{equation}
Since $\frac{P^R_M(t)(1+t)^d}{\nu_R(M)}\succcurlyeq 1$, we have $\frac{P^R_k(t)}{1-rt}=\frac{P^R_M(t)(1+t)^d}{\nu_R(M)(1-rt)^2} \succcurlyeq \frac{1}{(1-rt)^2}$.

Let $\cx_R(M)=e < \infty$. Then there exists $E>\beta^R_0(M)$ such that $\beta_i^R(M)\le E\cdot i^{e-1}$ for all $i\ge 1$. Also, note that $W(t)$ has finite complexity, i.e. if $a_i$ stands for $i$th coefficient of $W(t)$, there exist $b>0$, and $B>1$ such that $a_i<B\cdot i^{b-1}$ for all $i\ge 1$.
Set $V(t)=P^R_M(t)W(t)$. Hence if $c_i$ is the $i$th coefficient of $V(t)$, then $c_i\leq BE \cdot i^{b+e-1}$ for all $i\ge 1$.
Therefore we get
$$
 \sum_{i=0}^l\frac{c_i}{r^i}\le \left(c_0+BE\sum_{i=1}^l\frac{i^{b+e-1}}{r^i}\right)\le c_0+BET,
$$
where $T=\sum_{i=0}^\infty\frac{i^{b+e-1}}{r^i}$ is a convergent power series as $r\geq 2$. Now by setting $D=C(c_0+BET)$ one has $\frac{C\cdot P^R_M(t)W(t)}{1-rt} \preccurlyeq \frac{D}{1-rt}$. Putting these together with (\ref{12345}), we get an inequality of formal power series $\frac{1}{(1-rt)^2}\preccurlyeq \frac{D}{1-rt}$, which implies that $i\cdot r^i \le D\cdot r^i$ for all $i\ge 1$, contradiction.
\end{proof}

\begin{lem}\label{L22}
Let $(R,\m,k)$ be an  Artinian non-Gorenstein local ring of type $r$.
Let $I$ be a proper ideal of $R$ containing $\m^2$ such that ${\Gdim_R}R/I{<\infty}$.
Then {$I$ is a quasi-Gorenstein ideal, and} the following hold.
\par
{\rm(1)} $\nu_R(\m)=\nu_R(I) + r$,\qquad
{\rm(2)} $\nu_R(\m/I)=\ell_R(\m/I)=r$,\qquad
{\rm(3)} $\m^2=I\m$.
\end{lem}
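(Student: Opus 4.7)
The plan is to combine the structure theorem Lemma \ref{type} with the Poincar\'e series formula of Theorem \ref{Main1}(1), applying both to $R/I$ and to its algebraic dual $J=(R/I)^{*}=(0:_{R}I)$.

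Since $R$ is Artinian, $\depth R=0$, so the Auslander--Bridger formula gives $\Gdim_R(R/I)=0$; that is, $R/I$ is totally reflexive. The hypothesis $\m^{2}\subseteq I$ means $\m^{2}(R/I)=0$, so Lemma \ref{type}(1) applies to $M=R/I$ with $b=\nu_{R}(R/I)=1$ and produces an exact sequence $0\to k^{r}\to R/I\to k\to 0$. This yields $\m/I\cong k^{r}$ (part (2)) together with $\ell_{R}(R/I)=r+1$. Applying Theorem \ref{Main1}(1) with $d=0$, $\q=0$, $M=R/I$ gives $P^{R}_{R/I}(t)=(1-rt)P^{R}_{k}(t)$; reading off the coefficient of $t$ yields $\nu_{R}(I)=\nu_{R}(\m)-r$, which is part (1). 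Moreover, Lemma \ref{type}(2) supplies the Bass number $\mu^{1}_{R}(R)=r^{2}-1$.

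To prove $I$ is quasi-Gorenstein, set $J=(0:_{R}I)=\Hom_R(R/I,R)$. As the algebraic dual of the totally reflexive module $R/I$, $J$ is itself totally reflexive, and $\m^{2}J=0$ because $\m^{2}\subseteq I$ and $IJ=0$. The key step is to compute $\ell_{R}(J)$ in two ways. First, Lemma \ref{type}(1) applied to $M=J$ with $b=\nu_{R}(J)$ gives $\ell_{R}(J)=\nu_{R}(J)(r+1)$. Second, applying $\Hom_R(-,R)$ to $0\to \m/I\to R/I\to k\to 0$ and using $\Ext^{1}_{R}(R/I,R)=0$ produces an exact sequence
\[
0\to \Hom_R(k,R)\to J\to \Hom_R(\m/I,R)\to \Ext^{1}_{R}(k,R)\to 0
\]
of $R$-modules, in which $\Hom_R(k,R)=(0:_{R}\m)$ has length $r$, $\Hom_R(\m/I,R)\cong (0:_{R}\m)^{r}$ has length $r^{2}$, and $\Ext^{1}_{R}(k,R)\cong k^{r^{2}-1}$ by Lemma \ref{type}(2). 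The alternating sum of lengths gives $\ell_{R}(J)=r+1$. Comparing the two expressions forces $\nu_{R}(J)=1$, so $J=Ry$ for some $y\in R$ and $J\cong R/(0:_{R}y)$. Since $Iy=0$ implies $I\subseteq (0:_{R}y)$, and $\ell_{R}(R/I)=r+1=\ell_{R}(R/(0:_{R}y))$, we conclude $I=(0:_{R}y)$ and $J\cong R/I$; hence $I$ is quasi-Gorenstein.

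Part (3) now follows from parts (1) and (2): the short exact sequence $0\to I/\m^{2}\to \m/\m^{2}\to \m/I\to 0$ gives $\dim_{k}(I/\m^{2})=\nu_{R}(\m)-r$, which by part (1) equals $\nu_{R}(I)=\dim_{k}(I/\m I)$; since $\m I\subseteq \m^{2}\subseteq I$, this forces $\m I=\m^{2}$. The main obstacle is establishing the cyclicity of $J$, and the indispensable ingredient is the explicit Bass number $\mu^{1}_{R}(R)=r^{2}-1$ from Lemma \ref{type}(2); without this the two length expressions for $J$ would not match. The crucial idea is that Lemma \ref{type} applies to both $R/I$ and to its dual $J=(R/I)^{*}$ because both are non-zero totally reflexive modules killed by $\m^{2}$.
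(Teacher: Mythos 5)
Your proof is correct, and parts (1), (2), (3) follow essentially the same computations as the paper (the paper organizes part (3) via the chain $I\m\subseteq\m^2\subseteq I\subseteq\m$, you via the map $I/\m I\twoheadrightarrow I/\m^2$; these are the same count). The genuine difference is in establishing that $I$ is quasi-Gorenstein, specifically in how you arrive at $\ell_R\bigl((R/I)^*\bigr)=r+1$. The paper applies Lemma \ref{type}(1) to $(R/I)^*$ to get $\ell_R\bigl((R/I)^*\bigr)=b(r+1)$ with $b=\nu_R\bigl((R/I)^*\bigr)$, and then simply cites Gerko's Proposition 5.2, which asserts $\ell_R(M)=\ell_R(M^*)$ for a non-zero totally reflexive module $M$ over an Artinian ring with $\m^2M=0$; since $\ell_R(R/I)=r+1$ this forces $b=1$. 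You instead avoid that citation and re-derive the length of the dual from scratch: you apply $\Hom_R(-,R)$ to $0\to\m/I\to R/I\to k\to 0$, use $\Ext^1_R(R/I,R)=0$ to get a four-term exact sequence, and plug in the explicit Bass number $\mu^1_R(R)=r^2-1$ from Lemma \ref{type}(2) to compute $\ell_R\bigl((R/I)^*\bigr)=r+r^2-(r^2-1)=r+1$. This is longer, but it has the virtue of being self-contained within what the paper has already recorded (both consequences of Lemma \ref{type} are in play), whereas the paper's route requires pulling in an extra result of Gerko beyond what Lemma \ref{type} states. Both are valid; the paper's is shorter given the available reference, yours is more transparent about where the number $r+1$ actually comes from. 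One small point worth making explicit in your write-up: after concluding $\nu_R(J)=1$ you correctly finish by comparing $\ell_R(R/I)$ with $\ell_R(R/(0:_Ry))$ to get $I=(0:_Ry)$; the paper stops at $b=1$ and leaves this last step implicit, so your version is actually a touch more complete there.
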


\begin{proof} {First we show that $I$ is a quasi-Gorenstein ideal. Since $R/I$ is totally reflexive, we only need to show that $(R/I)^*\cong R/I$; see Definition \ref{df1}. As $\m^2 (R/I)^*=0$, by Lemma \ref{type}(1) there is an exact sequence $0 \to k^{br} \to (R/I)^* \to k^b \to 0 $ with $b=\nu_R(R/I)^*$.  Since $\ell_R(R/I)=\ell_R((R/I)^*)$ by \cite[Proposition 5.2]{Ger}, we get $br+b=\ell_R((R/I)^*)=\ell_R(R/I)=r+1$,
whence $b=1$.}

{(1)} By Theorem \ref{Main1}(1) we have $P^R_k(t)=\frac{P^R_{R/I}(t)}{1-rt}$, which shows $\nu_R(\m)=\nu_R(I) + r$.

{(2)}  One has $\ell_R(R/I)=r+1$ which implies $\ell_R(\m/I)=r$. Since $\m/I$ is annihilated by $\m$, we get $\nu_R(\m/I)=\ell_R(\m/I)$.

(3) Consider the chain $I\m \subseteq\m^2 \subseteq I \subseteq \m$ of ideals of $R$.
Computing the lengths of subquotients, we have $\ell_R(\m^2/I\m) + \nu_R(\m) = \nu_R(I) + \ell_R(\m/I)$.
It follows from (1) and (2) that $\ell_R(\m^2/I\m)=0$, and we obtain $\m^2=I\m$.
\end{proof}

\begin{lem}\label{m2}
Let $(R,\m,k)$ be a $d$-dimensional Cohen-Macaulay non-Gorenstein local ring with $\m^2\ne0$ (e.g. ${d}>0$).
Then $R/\m^2$ has infinite G-dimension as an $R$-module.
\end{lem}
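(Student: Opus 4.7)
The plan is to argue by contradiction, supposing $\Gdim_R(R/\m^2)<\infty$. Since $\m/\m^2$ is a nonzero submodule of $R/\m^2$ killed by $\m$, one has $\depth_R(R/\m^2)=0$, and the Auslander-Bridger equality forces $\Gdim_R(R/\m^2)=d$.

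For the base case $d=0$ I would appeal directly to Lemma~\ref{L22}: part~(3) applied with $I=\m^2$ gives $\m^2=\m^2\cdot\m=\m^3$, whence Nakayama forces $\m^2=0$, contradicting $\m^2\ne 0$. For $d\ge 1$, my plan is to reduce to this Artinian case. Passing to the faithfully flat extension $R[w]_{\m[w]}$ if necessary, I may assume $k$ is infinite, so that the $\m$-primary ideal $\m^2$ admits a minimal reduction $\q$ which is a parameter ideal of $R$ generated by $d$ elements. Provided that $\q\subsetneq\m^2$ strictly, the quotient $\bar R:=R/\q$ is an Artinian Cohen-Macaulay non-Gorenstein local ring with $\bar\m^2=\m^2/\q\ne 0$, and $R/\m^2$ becomes naturally a $\bar R$-module identified with $\bar R/\bar\m^2$. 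The standard change-of-rings formula for G-dimension along the $R$-regular sequence $\q$ then gives $\Gdim_{\bar R}(\bar R/\bar\m^2)=\Gdim_R(R/\m^2)-d=0$, and applying the already-treated $d=0$ case to $\bar R$ yields $\bar\m^2=0$, the desired contradiction.

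The main obstacle is the edge case in which every minimal reduction of $\m^2$ coincides with $\m^2$ itself, i.e., $\nu_R(\m^2)=d$ and $\m^2$ is generated by a system of parameters. I plan to exclude this by taking $M=R$ and $\q=\m^2$ in Theorem~\ref{Main1}(1): the module $R$ is trivially totally reflexive, the hypothesis $\m^2 M\subseteq\q M$ holds by construction, and the resulting identity $P^R_k(t)=(1+t)^d/(1-rt)$ forces $\nu_R(\m)=d+r$ by comparing the coefficient of $t$. On the other hand, $\bar R=R/\m^2$ has $\bar\m^2=0$, so its socle equals $\bar\m$ and has $k$-dimension $\nu_R(\m)$; since the type is preserved when passing to $R$ modulo an $R$-regular sequence, one obtains $r=\nu_R(\m)$. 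The two identities collapse to $d=0$, ruling out this edge case when $d\ge 1$ and completing the reduction.
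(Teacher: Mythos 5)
Your proof is correct and follows essentially the same route as the paper's: reduce modulo a parameter ideal $\q\subseteq\m^2$, use the change-of-rings formula for G-dimension to land in the Artinian setting, then apply Lemma~\ref{L22}(3) and Nakayama to force $\overline\m^2=0$. The one genuine divergence is how you dispose of the degenerate situation where $\m^2$ itself is a parameter ideal. You rule it out by feeding $M=R$, $\q=\m^2$ into Theorem~\ref{Main1}(1) to get $\nu_R(\m)=d+r$ from the linear coefficient of $P_k^R(t)=(1+t)^d/(1-rt)$, while a direct socle count in $R/\m^2$ gives $\nu_R(\m)=r$, whence $d=0$. The paper instead observes that in this situation $\m^2$ is a nonzero ideal of finite projective dimension (being generated by a regular sequence) and invokes Levin--Vasconcelos \cite[Theorem 1.1]{LV} to conclude $R$ is regular, a contradiction; this is shorter and needs no residue-field enlargement. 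Two minor remarks on your setup: passing to $R[w]_{\m[w]}$ and invoking minimal reductions is unnecessary, since your reduction step only uses that $\q\subsetneq\m^2$ is a parameter ideal, and one can always find such a $\q$ when $d\ge1$ (take any system of parameters $x_1,\dots,x_d\in\m^2$; if it generates all of $\m^2$, replace $x_1$ by $x_1^2$, which stays a system of parameters and, by Krull's height theorem, drops strictly below $\m^2$). Also note that the explicit Auslander--Bridger computation $\Gdim_R(R/\m^2)=d$ is not needed; the change-of-rings formula $\Gdim_{\overline R}(\overline R/\overline\m^2)=\Gdim_R(R/\m^2)-d$ combined with Auslander--Bridger over the Artinian $\overline R$ already gives $\Gdim_{\overline R}(\overline R/\overline\m^2)=0$, which is all that Lemma~\ref{L22} requires.
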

\begin{proof}
Assume that $R/\m^2$ has finite G-dimension.
Choose a parameter ideal $\q$ of $R$ contained in $\m^2$.
Setting $\overline R=R/\q$ and $\overline\m=\m/\q$, we have
$$
\Gdim_{\overline R}{(}\overline R/{\overline\m}^2{)}=\Gdim_R{(}R/\m^2{)}-d=0
$$
by \cite[(1.5.4) and (1.4.8)]{C}.
Using Lemma \ref{L22}(3), we get ${\overline\m}^2={\overline\m}^2\overline\m$.
Nakayama's lemma implies ${\overline\m}^2=0$, whence $\m^2=\q$.
Since $\q$ is generated by an $R$-sequence, $\m^2$ is a nonzero $R$-module of finite projective dimension.
It follows from \cite[Theorem 1.1]{LV} that $R$ is regular, which contradicts the assumption that $R$ is non-Gorenstein.
\end{proof}

{Now, we are in a position to prove Theorem \ref{C21}.}

\begin{proof}[Proof of Theorem \ref{C21}]
We use induction on $d=\dim R$. First,  {assume} $d = 0$.
Lemma \ref{L22} says $\nu_R(\m/I)=r$, where $r$ is the type of $R$. As $(\m/I)^2=0$, {$R/I$ is a Cohen-Macaulay ring with minimal multiplicity, and} one has $P^{R/I}_k(t)=\frac{1}{1-rt}${; see \cite[Example 5.2.8]{AV}}.
Theorem \ref{Main1}(1) implies $P^R_k(t)=\frac{P^R_{R/I}(t)}{1-rt}=P^R_{R/I}(t)\,P^{R/I}_k(t)$.
Now by \cite[Theorem 1.1]{Lev} $R\rightarrow R/I$ is a large homomorphism.

Assume $d \geq 1$.
Then $I$ strictly contains $\m^2$ by Lemma \ref{m2}, and {by the Prime Avoidance Theorem,} there exists a regular element $x \in I \setminus \m^2$.
Set $\overline{R}=R/(x)$ and $\overline I=I/(x)$.
Then $\Gdim_{\overline{R}}(R/I)<\infty$.
Consider the sequence $R\rightarrow \overline{R} \rightarrow R/I$ of surjective local homomorphisms.
By the induction hypothesis $\overline{R} \rightarrow R/I$ is large. Since $R\rightarrow \overline{R}$ is also large by \cite[Theorem 2.2]{Lev}, it follows that so is the composition $R\rightarrow R/I$. This proves (1).

For (2), since $\m^2 \subseteq I$, $I$ is homogeneous and $R/I$ is a graded Koszul ring. Similarly as above, the graded Poincar\'{e} series of $k$ over $R/I$ is $P^{R/I}_k(s,t)=\frac{1}{1-r(st)}$. As $R\rightarrow R/I$ is large, {we have} $$P^R_k(s,t)=P^R_{R/I}(s,t)\,P^{R/I}_k(s,t)=\frac{P^R_{R/I}(s,t)}{1-r(st)}.$$
Thus the minimal graded free resolution of $k$ over $R$ is linear if and only if so does that of $R/I$ over $R$.
\end{proof}

\begin{cor}\label{T22}
Let $(R,\m,k)$ be a $d$-dimensional Cohen-Macaulay non-Gorenstein local ring of type $r$.
Let $I$ be a quasi-complete intersection ideal of $R$ containing $\m^2$.
Then {$$ P^R_k(t)=\frac{(1-t^2)^d}{(1-t)^{{\nu_R(\m)-r}}(1-rt)}.$$}
\end{cor}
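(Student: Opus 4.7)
The plan is to combine Theorem~\ref{C21} with the standard Poincar\'e series formula for quasi-complete intersections, then to pin down the remaining numerical invariants by applying Lemma~\ref{L22} to a suitably chosen Artinian quotient of $R$.

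By Remark~\ref{R11} the ideal $I$ is quasi-Gorenstein, so $\Gdim_R(R/I)=\gr I<\infty$, and Theorem~\ref{C21}(1) yields
$$
P^R_k(t)=P^R_{R/I}(t)\cdot P^{R/I}_k(t).
$$
Since $\m^2\subseteq I$, the maximal ideal $\overline\m=\m/I$ of $R/I$ has square zero, so $R/I$ is Artinian and a straightforward iteration of $\Omega_{R/I}k \cong k^{\mu}$ shows $P^{R/I}_k(t)=\frac{1}{1-\mu t}$, where $\mu=\dim_k(\m/I)$. As $\depth R/I=0$, the Auslander-Bridger equality forces $\gr I=d$; the Tate complex associated to a quasi-complete intersection ideal of $n:=\nu_R(I)$ generators and grade $d$ (see \cite{AHS}) then gives
$$
P^R_{R/I}(t)=\frac{(1+t)^n}{(1-t^2)^{n-d}}=\frac{(1+t)^d}{(1-t)^{n-d}}.
$$

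It remains to prove $\mu=r$ and $n=\nu_R(\m)-r$. Because $R/I$ is Artinian, the ideal $\m I$ is $\m$-primary, so one can choose a parameter ideal $\q$ of $R$ contained in $\m I$ (for instance, large powers of a system of parameters). Set $\overline R=R/\q$ and $\overline I=I/\q$: then $\overline R$ is Artinian non-Gorenstein of type $r$, one has $\overline\m^2\subseteq\overline I$, and $\nu_{\overline R}(\overline\m)=\nu_R(\m)$ since $\q\subseteq\m^2$. Moreover, Christensen's change-of-rings formula for G-dimension (the same one invoked in the proof of Lemma~\ref{m2}) delivers $\Gdim_{\overline R}(R/I)=\Gdim_R(R/I)-d=0<\infty$. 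Lemma~\ref{L22} thus applies to $(\overline R,\overline I)$ and yields $\dim_k(\overline\m/\overline I)=r$ together with $\overline\m^2=\overline\m\,\overline I$. The first equality is $\mu=r$; the second, combined with $\q\subseteq\m I$, promotes to $\m^2=\m I+\q=\m I$ in $R$, and the exact sequence $0\to I/\m^2\to\m/\m^2\to\m/I\to 0$ then forces $n=\nu_R(I)=\dim_k(I/\m I)=\dim_k(I/\m^2)=\nu_R(\m)-r$.

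Substituting $\mu=r$ and $n=\nu_R(\m)-r$ into the product and using $(1-t^2)^d=(1-t)^d(1+t)^d$ recovers the stated formula. The main obstacle is the refinement of choosing $\q$ inside $\m I$ rather than just inside $\m^2$: this is precisely what upgrades the conclusion $\overline\m^2=\overline\m\,\overline I$ of Lemma~\ref{L22}(3) into the equality $\m^2=\m I$ in $R$, which in turn is what allows $\nu_R(I)=\dim_k(I/\m I)$ to be rewritten as the explicit number $\nu_R(\m)-r$.
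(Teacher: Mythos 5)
Your proof is correct, and it diverges from the paper's argument in the way it establishes the crucial numerical identity $\nu_R(I)=\nu_R(\m)-r$. The paper handles this by induction on $d$: the base case $d=0$ is Lemma~\ref{L22}(1), and the inductive step (implicitly) mods out by a regular element $x\in I\setminus\m^2$, which is available because of Lemma~\ref{m2} and which decrements both $\nu_R(\m)$ and $\nu_R(I)$ by one while preserving the type. You instead make a single reduction to the Artinian case by choosing a parameter ideal $\q$ inside $\m I$, which is $\m$-primary because $\m^2\subseteq I$. The refinement $\q\subseteq\m I$ (rather than merely $\q\subseteq\m^2$) is the point that makes your route work in one step: it lets you lift the conclusion $\overline\m^2=\overline\m\,\overline I$ of Lemma~\ref{L22}(3) to the equality $\m^2=\m I$ in $R$ itself, whence $\nu_R(I)=\dim_k(I/\m I)=\dim_k(I/\m^2)=\nu_R(\m)-r$ drops out of the exact sequence $0\to I/\m^2\to\m/\m^2\to\m/I\to0$. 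You also re-derive $P^{R/I}_k(t)=\frac{1}{1-rt}$ from scratch via $\Omega_{R/I}k\cong k^{\mu}$ and then identify $\mu$ with $r$, whereas the paper implicitly quotes the same Poincar\'e series for rings with square-zero maximal ideal; the two routes to that sub-claim are equivalent. Your one-shot reduction avoids invoking Lemma~\ref{m2} to produce regular elements in $I\setminus\m^2$, at the mild cost of the extra observation that $\m I$ is $\m$-primary and hence contains a parameter ideal; both proofs are sound, and yours is arguably more self-contained.
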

\begin{proof}
By Theorem \ref{C21}, the homomorphism $R\rightarrow R/I$ is large. Therefore, as we saw before, $P^R_k(t)=P^R_{R/I}(t)\cdot P^{R/I}_k(t)=\frac{P^R_{R/I}(t)}{(1-rt)}$. It follows from \cite[Theorem 6.2]{AHS} that $P^R_{R/I}(t)=\frac{(1-t^2)^d}{(1-t)^{\nu_R(I)}}$. It remains to show that $\nu_R(I)=\nu_R(\m)-r$. Lemma \ref{L22}(1) establishes the case $d=0$, and the case $d>0$ follows by induction.
\end{proof}

 {The following is an example for Theorem \ref{C21} and Corollary \ref{T22}.}

\begin{ex}\label{qci} Let  $R=k[{w},x,y,z]/({w^2},x^2,y^2,z^2,z{w})$. Then $R$ is an Artinian local ring with the maximal ideal $\m=({w},x,y,z)$.  {One checks $(xyz,xyw)\subseteq (0:_R\m)$ and hence $R$ is not Gorenstein.}  {Note that $R\cong P[w,z]/(w^2,z^2,wz)$, where $P=k[x,y]/(x^2,y^2)$ is a Koszul complete intersection; see \cite[4.5]{HS}. Since $P\rightarrow R$ is flat, the}  ideal $I=(x,y){R}$ is a quasi-complete intersection ideal {of $R$} containing $\m^2${; see \cite[Lemma 2.1]{AHS}}. By Lemma \ref{L22}(1), the type of $R$ is $\nu_R(\m)-\nu_R(I)=2$.
Therefore by Corollary \ref{T22} one has $P^R_k(t)=\frac{1}{(1-t)^2(1-2t)}$. Since  {$k$ has a linear resolution over $P$,} $R/I$ has linear free resolution {over $R$ and so} $R/I$ is a Koszul module. Hence $R$ is a Koszul ring by Theorem \ref{C21}(2).
\end{ex}

{The following example shows that Theorem \ref{C21} fails if $\Gdim_R(R/I)=\infty$.}

\begin{ex}\label{short} \cite[Theorem 3.5]{AHS}
Let $k$ be a field of characteristic different from $2$, and let
$$
R=\frac{k[w,x,y,z]}{(w^2,wx-y^2,wy-xz,wz,x^2+yz,xy,z^2)}.
$$
Then $R$ is an Artinian local ring with the maximal ideal $m=(w,x,y,z)$. Since $(y^2, xz) \subseteq(0:_R\m)$, $R$ is not Gorenstein. By \cite[Theorem 3.5]{AHS} $x,y$ is an exact pair of zero-divisors, and therefore $(x)$ is a quasi-complete intersection ideal of $R$.
One can easily check that $\m^2 \subseteq (x)$. Thus, Lemma \ref{L22}(1) says that type of $R$ is $3$. Hence by Corollary \ref{T22}, one has $P^R_k(t)=\frac{1}{(1-t)(1-3t)}$. {Let $I=(x-z)$. Then by using Macaulay2 one checks $\m^2\subseteq I$, and $(0:_RI)=(x-y+z,yw)$. Hence $I$ is not a quasi-Gorenstein ideal and therefore $\Gdim_R(R/I)=\infty$ by Lemma \ref{L22}. We have $\beta_2(R/I)=\nu_R(0:_RI)=2$.  If $R\rightarrow R/I$ is large then by \cite[Theorem 1.1]{Lev} we have $P^R_k(t)=P^R_{R/I}(t)\cdot P^{R/I}_k(t)$, and so $\frac{1}{(1-t)(1-3t)}=P^R_{R/I}(t)\cdot\frac{1}{1-3t}$. Hence one gets $P^R_{R/I}(t)=\frac{1}{1-t}$. Since $\beta^R_2(R/I)=2$, this is impossible.}
\end{ex}

\section{Construction}
In {this section}, we generalize results of {\cite{CGT,RSW}}. Both of {these references} use exact zero-divisors to construct infinitely many totally reflexive modules. Our ingredients for construction of such modules are quasi-Gorenstein ideals, more general  {than} exact zero-divisors.
For example, the ideal $I$ in Example \ref{qci} is a quasi-Gorenstein ideal, but since $I$ is not principal, $I$ is not generated by an exact zero-divisor; see also Example \ref{EX}.

\begin{lem}\label{lem12} Let $I$ be an ideal of $R$. If  $(0:_RI^2)=(0:_RI)$ then for every finitely generated $R/I$-module $N$, $\Hom_R(N,R/(0:_RI))=0$.
\end{lem}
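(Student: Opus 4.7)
My plan is to reduce the problem to a computation of an annihilator in the quotient ring $R/(0:_RI)$. Set $J=(0:_RI)$ for brevity. Any element of $\Hom_R(N,R/J)$ factors through $(0:_{R/J}I)$, because if $\phi\colon N\to R/J$ is $R$-linear and $N$ is killed by $I$, then $I\cdot\phi(n)=\phi(In)=0$ for every $n\in N$, so the image of $\phi$ lands in $(0:_{R/J}I)$. Hence it suffices to show that this submodule of $R/J$ is zero.

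Next I would compute $(0:_{R/J}I)$ explicitly. An element $r+J\in R/J$ lies in $(0:_{R/J}I)$ precisely when $Ir\subseteq J=(0:_RI)$, which is to say $I\cdot I\cdot r=0$, i.e.\ $r\in(0:_RI^2)$. Therefore
$$
(0:_{R/J}I)=(0:_RI^2)/J.
$$
The hypothesis $(0:_RI^2)=(0:_RI)=J$ forces the right-hand side to be zero, so every $\phi$ as above has image $0$ and $\Hom_R(N,R/J)=0$.

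I do not foresee any real obstacle here; the lemma is essentially a one-line annihilator calculation once one translates the condition ``$\phi$ is $R$-linear and $N$ is annihilated by $I$'' into the containment $\mathrm{Im}(\phi)\subseteq(0:_{R/J}I)$. The only thing to be careful about is keeping straight the difference between $(0:_RI)$ and $(0:_RI^2)$ and using the hypothesis at the right moment.
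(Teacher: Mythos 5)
Your proof is correct, and the essential content is the same as the paper's: both reduce to the computation $(0:_{R/J}I)=(0:_RI^2)/(0:_RI)=0$ where $J=(0:_RI)$. The only difference is in how you reduce to this calculation. The paper takes a surjection $(R/I)^n\twoheadrightarrow N$ and applies $\Hom_R(-,R/J)$ to get an injection $\Hom_R(N,R/J)\hookrightarrow\Hom_R((R/I)^n,R/J)$, then observes $\Hom_R(R/I,R/J)\cong(0:_RI^2)/(0:_RI)=0$. You skip the free cover and argue directly that the image of any $R$-linear $\phi:N\to R/J$ is killed by $I$ (since $N$ is), hence lands in $(0:_{R/J}I)$, which you then compute to be zero. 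Your version is a bit more direct and also makes clear that the finite-generation hypothesis on $N$ is not actually used; otherwise the two arguments are the same.
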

\begin{proof} {Applying $\Hom_R(-,R/(0:_RI))$ to a} surjection $(R/I)^n {\twoheadrightarrow} N$,  {we get} an injection $\Hom_R(N,R/(0:_RI)) {\hookrightarrow} \Hom_R((R/I)^n,R/(0:_RI))$.
Since $\Hom_R(R/I,R/(0:_RI))\cong(0:_RI^2)/(0:_RI)=0$, we are done.
\end{proof}

\begin{lem}\label{41723}
Let $I$ be an ideal of $R$. Assume one of the following hold:
\begin{enumerate}[\rm(a)]
\item
 {$(0:_RI^2)=(0:_RI)$}, or
\item
$I$ is {quasi-Gorenstein} and $R/I$ is a G-regular ring.
\end{enumerate}
Then every exact sequence $$0 \to (R/I)^n \to M \to R/(0:_RI) \to 0$$ with $n\ge0$, as an $R$-complex, has a direct summand isomorphic to an exact sequence $0 \to (R/I)^t \to N \to R/(0:_RI) \to 0$ for some $t$ with $0\le t\le n$ such that $N$ is indecomposable.
In particular, $M\cong(R/I)^{n-t}\oplus N$.
\end{lem}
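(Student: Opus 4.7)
The plan is to perform a Krull--Schmidt decomposition of $M$, isolate the indecomposable summand carrying the essential extension class, and show that every other summand is a copy of $R/I$. Since both $R/I$ and $R/(0{:}_RI)$ are cyclic modules over the local ring $R$, each has local endomorphism ring (isomorphic to itself) and is therefore indecomposable. Write $M=M_1\oplus\cdots\oplus M_s$ as a direct sum of indecomposable summands via Krull--Schmidt for finitely generated modules over the Noetherian local ring $R$. A Nakayama-style argument using cyclicity of $R/(0{:}_RI)$ and surjectivity of $\pi$ shows that at least one summand $M_{i_0}$ satisfies $\pi(M_{i_0})=R/(0{:}_RI)$: lifting $\bar 1$ to $m=\sum m_i$ with $\pi(m_i)=r_i\bar 1$ one finds $\sum r_i\equiv 1\pmod{\mathfrak m}$, so some $r_i$ is a unit.

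Under hypothesis (a), apply $\Hom_R(-,R/(0{:}_RI))$ to the given sequence. By Lemma~\ref{lem12}, $\Hom_R((R/I)^n,R/(0{:}_RI))=0$, hence
$$\Hom_R(M,R/(0{:}_RI))\cong \Hom_R(R/(0{:}_RI),R/(0{:}_RI))\cong R/(0{:}_RI),$$
which is indecomposable. Applying Krull--Schmidt to the direct sum $\bigoplus_i\Hom_R(M_i,R/(0{:}_RI))$ then forces $\Hom_R(M_i,R/(0{:}_RI))=0$, and in particular $\pi|_{M_i}=0$, for every $i\ne i_0$; such $M_i$ lie in $\ker\pi=(R/I)^n$.

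Under hypothesis (b), the case $\gr I>0$ is immediate, since then $(0{:}_RI)=0$ and the sequence splits with $N=R$, $t=0$. Assume $\gr I=0$. By Remark~\ref{remqg}(2), $R/I$ and $R/(0{:}_RI)\cong I^*$ are totally reflexive, hence so is $M$; moreover $0\to R/I\cong(0{:}_RI)\to R\to R/(0{:}_RI)\to 0$ gives $\Omega(R/(0{:}_RI))\cong R/I$. An indecomposable summand $M_i$ with $IM_i=0$ is a totally reflexive $R/I$-module by Lemma~\ref{lem11}, hence free over the G-regular ring $R/I$, hence $M_i\cong R/I$. For any other summand $M_j$ ($j\ne i_0$) on which $\pi$ is nonzero, the plan is to construct a lift $\sigma\colon M_j\to M_{i_0}$ of $\pi|_{M_j}$ through $\pi|_{M_{i_0}}$; the $\Ext^1_R(M_j,\ker\pi|_{M_{i_0}})$-obstruction will be controlled via the Schanuel isomorphism $\ker(\pi|_{M_{i_0}})\oplus R\cong R/I\oplus M_{i_0}$ (from $\Omega(R/(0{:}_RI))\cong R/I$), the vanishing $\Ext^1_R(M_j,R)=0$ from total reflexivity, and G-regularity of $R/I$. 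The automorphism of $M$ sending $(m_{i_0},m_j,\ldots)\mapsto(m_{i_0}-\sigma(m_j),m_j,\ldots)$ preserves $\pi$ and replaces $M_j$ by an isomorphic summand whose $\pi$-restriction is zero; finitely many such adjustments reduce to the setting of case (a).

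Once $\pi|_{M_i}=0$ for every $i\ne i_0$, one verifies directly that $(R/I)^n=\ker(\pi|_{M_{i_0}})\oplus\bigoplus_{i\ne i_0}M_i$. Krull--Schmidt on $(R/I)^n$, whose unique indecomposable decomposition consists of $n$ copies of $R/I$, yields $\ker(\pi|_{M_{i_0}})\cong(R/I)^t$ for some $0\le t\le n$ and each $M_i$ ($i\ne i_0$) isomorphic to $R/I$. Taking $N=M_{i_0}$ produces the desired direct-summand sequence $0\to(R/I)^t\to N\to R/(0{:}_RI)\to 0$ with $N$ indecomposable, and gives $M\cong N\oplus(R/I)^{n-t}$. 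The main obstacle is the $\Ext^1$-obstruction in the lifting step of case (b): here one must carefully combine the quasi-Gorenstein identity $\Omega(R/(0{:}_RI))\cong R/I$ with G-regularity of $R/I$ and total reflexivity of the relevant modules, whereas case (a) is cleaner because Lemma~\ref{lem12} directly makes the $\Hom$-groups vanish.
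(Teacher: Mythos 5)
Your proof of case (a) is correct and rests on the same key lever as the paper (Lemma~\ref{lem12}), but it is packaged differently: you decompose $M$ completely into indecomposables at the outset and kill all but one component of $\pi$ using $\Hom_R(-,R/(0{:}_RI))$, whereas the paper splits off one nonzero summand $Y$ at a time, shows $Y\cong(R/I)^r$, and iterates. Both work. One small caution: you should not invoke ``Krull--Schmidt'' by name, since uniqueness of indecomposable decompositions can fail over non-Henselian local rings; in fact you never need uniqueness. You only need that an indecomposable module is not a nontrivial direct sum (so at most one $\Hom_R(M_i,R/(0{:}_RI))$ is nonzero), and that a direct summand of $(R/I)^n$ that is an $R/I$-module is $R/I$-projective, hence $R/I$-free since $R/I$ is local. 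Those two facts suffice and hold over any Noetherian local ring.

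Case (b) has a genuine gap. Your plan is to lift $\pi|_{M_j}$ through $\pi|_{M_{i_0}}$ and use the resulting automorphism of $M$ to annihilate $\pi$ on the other summands. The obstruction to such a lift lives in $\Ext^1_R(M_j,\ker(\pi|_{M_{i_0}}))$, and the Schanuel isomorphism together with $\Ext^1_R(M_j,R)=0$ only reduces this to $\Ext^1_R(M_j,R/I)\oplus\Ext^1_R(M_j,M_{i_0})$, neither of which is known to vanish --- indeed $\Ext^1_R(R/I,R/I)$ is generically nonzero when $\gr I=0$. You have not shown the class vanishes, and it is not clear it does. The paper sidesteps this entirely by an observation you do not use: after writing $M\cong X\oplus Y$ with $f=\pi|_X$ surjective, the snake-lemma diagram exhibits $Y$ as a \emph{quotient} of $\ker\pi=(R/I)^n$, so $Y$ is automatically an $R/I$-module even though $\pi|_Y$ may be nonzero. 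Under (b), $Y$ is then a direct summand of the totally reflexive module $M$, hence totally reflexive over $R$, hence (Lemma~\ref{lem11}) totally reflexive over $R/I$, hence free over the G-regular ring $R/I$. That is the whole argument; no lifting or $\Ext$-obstruction analysis is needed. In your language: the missing observation is that $\bigoplus_{i\ne i_0}M_i$, being a quotient of $(R/I)^n$ (lift the $R/(0{:}_RI)$-component of any element of $M$ to $M_{i_0}$ and subtract), is an $R/I$-module, which makes each $M_i$ with $i\ne i_0$ an $R/I$-module --- you only observed this for summands already satisfying $IM_i=0$, which is the conclusion, not the hypothesis.
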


\begin{proof}
Set $J=(0:_RI)$. The assertion clearly holds if $J=0$, so let us assume $J\ne0$. Hence $\gr I=0$. Assume $M$ is decomposable and write $M\cong X\oplus Y$ for some nonzero $R$-modules $X,Y$.
Then we have an exact sequence $0 \to (R/I)^n \to X\oplus Y \xrightarrow{(f,g)} R/J \to 0$ with $n\geq 1$. {Since $R/J$ is a cyclic $R$-module and $(f,g)$ is surjective, one of $f$ or $g$ has to be surjective.}
Assume that $f$ is surjective. Then the following commutative diagram with exact rows and columns

$$
\begin{CD}
@. 0 @. 0 @. 0 \\
@. @VVV @VVV @VVV \\
0 @>>> Z @>>> X @>f>> R/J @>>> 0 \\
@. @VVV @V\binom{1}{0}VV @V{=}VV \\
0 @>>> (R/I)^n @>{(^l_h)}>> X\oplus Y @>(f,g)>> R/J @>>> 0 \\
@. @V h VV @VVV @VVV \\
0 @>>> Y @>=>> Y @>>> 0 @>>> 0 \\
@. @VVV @VVV @VVV \\
@. 0 @. 0 @. 0
\end{CD}
$$
shows that $Y$ is an $R/I$-module.
Set $Z=\Ker f$.

{The assumption (a) and Lemma \ref{lem12} imply that $\Hom_R(Y,R/J)=0$. Hence $g=0$, and the map $h:(R/I)^n\rightarrow Y$ yields a splitting $Y\rightarrow (R/I)^n$. Thus $Y\oplus Z \cong (R/I)^n$ and hence $Y$ and $Z$ are free $R/I$-modules.

Assume now that (b) applies. Then {both} $R/I$ {and $R/J$ are} totally-reflexive by Remark \ref{remqg}(2). {Hence $M$ is a totally reflexive module. As $X$ and $Y$ are direct summands of $M$, they are also totally reflexive modules.} Then Lemma \ref{lem11} shows that $Y$ is a {totally reflexive $R/I$-module, and since $R/I$ is G-regular, it is a} free $R/I$-module.

Thus in both cases, we have $Y\oplus Z \cong (R/I)^n$ and $Y\cong(R/I)^r$ for some $r$ with $0<r\le n$.}
 {Therefore there is} an exact sequence $0 \to (R/I)^{n-r} \to X \to R/J \to 0$, where $Z\cong(R/I)^{n-r}$.
Iterating this procedure, we get the desired exact sequence.
\end{proof}

\begin{lem}\label{2126}
Let $I=(x_1,\dots x_r)$ {be an ideal}, {and let} $y$ and $a_1,\dots,a_n$ be {in $R$.} Then for an $R$-module $M$ the following are equivalent.
\begin{enumerate}[\rm(1)]
\item
There exists an exact sequence $0 \to (R/I)^{n} \to M \to R/(y) \to 0$ of $R$-modules.
\item
There exists an exact sequence $R^{nr+1} \xrightarrow{T(\underline{x},y,\underline{a})} R^{n+1} \to M \to 0$, where $a_1,\dots,a_n \in (I:_R(0:_Ry))$ and   $$
T(\underline{x},y,\underline{a})=\left(\begin{array}{ccccccc|c}
x_1 & \dots & x_r & & & 0 & & a_{1} \\
& & & \ddots  & & & & \vdots \\
& 0 &  & & x_1 & \dots & x_r & a_{n} \\
\hline
& & & 0 & & & & y
\end{array}\right).
$$
\end{enumerate}
\end{lem}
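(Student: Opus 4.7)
The plan is to prove both implications by directly unpacking the presentation of $M$ encoded by the matrix $T=T(\underline{x},y,\underline{a})$. Denote by $\bar e_1,\dots,\bar e_{n+1}$ the images in $M$ of the standard basis of $R^{n+1}$; the columns of $T$ then correspond to the relations $x_j\bar e_i=0$ (for $1\le i\le n$, $1\le j\le r$) together with a single ``mixing'' relation $v:=\sum_{i=1}^n a_i\bar e_i+y\bar e_{n+1}=0$.

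For $(2)\Rightarrow(1)$, I first define $\pi\colon M\twoheadrightarrow R/(y)$ by $\bar e_i\mapsto 0$ for $i\le n$ and $\bar e_{n+1}\mapsto 1$; every column of $T$ clearly maps to zero, so $\pi$ is well-defined, and $\ker\pi$ is generated by $\bar e_1,\dots,\bar e_n$ because the mixing relation reads $y\bar e_{n+1}=-\sum a_i\bar e_i$. The crucial step is then to identify $\ker\pi\cong(R/I)^n$: a tuple $(r_1,\dots,r_n)\in R^n$ whose image in $M$ vanishes must satisfy $\sum r_i e_i=\sum_{i,j}s_{ij}x_j e_i+tv$ for some $s_{ij},t\in R$; comparing bottom coordinates forces $ty=0$, and comparing the remaining coordinates gives $r_i=\sum_j s_{ij}x_j+ta_i$. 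The hypothesis $a_i\in(I:_R(0:_Ry))$ is exactly what makes $ta_i\in I$, so $r_i\in I$; the reverse inclusion is immediate.

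For $(1)\Rightarrow(2)$, I take $f_1,\dots,f_n\in M$ to be the images of the standard basis of $(R/I)^n$, pick a lift $\xi\in M$ of $1\in R/(y)$, and note that $\{f_1,\dots,f_n,\xi\}$ generates $M$. Since $y\xi\in(R/I)^n$, I write $y\xi=-\sum a_i f_i$ for some $a_i\in R$. The induced surjection $R^{n+1}\twoheadrightarrow M$ visibly kills every column of $T$. To see that no further relations are needed, take $\sum r_i e_i+se_{n+1}$ in its kernel: projecting to $R/(y)$ forces $s=ty$ for some $t\in R$, and injectivity of $(R/I)^n\hookrightarrow M$ applied to $\sum(r_i-ta_i)f_i=0$ yields $r_i-ta_i\in I$, expressing the original relation as $tv$ plus an element of $I\cdot R^n$. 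Finally, for any $t\in(0:_Ry)$ one has $ty\xi=0$, whence the same injectivity gives $ta_i\in I$; that is, $a_i\in(I:_R(0:_Ry))$.

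The conceptually delicate point, essentially the same in both directions, is that the admissibility condition $a_i\in(I:_R(0:_Ry))$ is exactly what reconciles the $y$-torsion of the quotient $R/(y)$ with the $I$-torsion of the submodule $(R/I)^n$; without it, the cokernel of $T$ would carry extra torsion from $(0:_Ry)\cdot\xi$ that the submodule side cannot absorb, and conversely the ``obvious'' relations would fail to span the kernel of $R^{n+1}\twoheadrightarrow M$.
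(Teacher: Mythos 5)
Your proof is correct, and in both directions the argument is sound. For the record: the paper does not prove this lemma itself but simply cites Lemmas~3.2 and 3.3 of \cite{CGT}, where the equivalence is established by exactly the kind of direct manipulation of the presentation matrix you carry out (splitting off $\pi\colon M\to R/(y)$, identifying $\ker\pi$ as $(R/I)^n$ via the kernel of $R^n\to M$, and in the converse using $y\xi\in(R/I)^n$ to produce the mixing relation). So your argument is essentially a self-contained reconstruction of the cited proof rather than a genuinely different route. The one place worth double-checking, and which you handle correctly, is that the membership $a_i\in(I:_R(0:_Ry))$ is exactly what pins down $\ker(R^n\to M)$ to be $I^n$ (and no larger) in $(2)\Rightarrow(1)$, and conversely is forced by the injectivity of $(R/I)^n\hookrightarrow M$ applied to $t\in(0:_Ry)$ in $(1)\Rightarrow(2)$; without that condition the cokernel of $T$ acquires extra torsion from $(0:_Ry)\cdot a_i$ and the two sides fail to match, as you observe.
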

\begin{proof}
See \cite[Lemmas 3.2 and 3.3]{CGT}, whose proofs are explained just before them.
\end{proof}

\begin{dfn} Let $I$ be an ideal of $R$. Define
$$
\s(I):= \sup \{ n\ge0\mid \nu_{R/I}(\mathfrak{a})=n\text{ for some ideal $\mathfrak{a}$ of $R/I$}\}.
$$
\end{dfn}

\begin{lem}
Let $I$ be an ideal of $R$ and let $y \in \m$ be an element.
For any integer $n>\s(I+(y))$ and any exact sequence $0 \to (R/I)^n \to M \to R/(y) \to 0$ of $R$-modules, $M$ has a direct summand isomorphic to $R/I$.
\end{lem}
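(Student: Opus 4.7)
The plan is to use Lemma~\ref{2126} to convert the given short exact sequence into a concrete presentation of $M$, then apply Nakayama's lemma in $R/(I+(y))$ to reduce the number of independent ``gluing'' coefficients $a_i$, and finally exhibit $R/I$ as a direct summand by means of a triangular change of generators.

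First I would invoke Lemma~\ref{2126} to write $M$ with generators $e_1,\dots,e_n,e_{n+1}$ subject to the relations $Ie_i=0$ for $i=1,\dots,n$ and $ye_{n+1}=\sum_{i=1}^n a_ie_i$, where $a_i\in(I:_R(0:_Ry))$. Setting $S=R/(I+(y))$, the ideal $\mathfrak a=(\bar a_1,\dots,\bar a_n)S$ satisfies $\nu_S(\mathfrak a)\le\s(I+(y))<n$ by the definition of $\s$ and the hypothesis. Hence $\{\bar a_1,\dots,\bar a_n\}$ fails to be a minimal generating set of $\mathfrak a$, and Nakayama's lemma in the local ring $S$ yields, after relabeling, $\bar a_n\in(\bar a_1,\dots,\bar a_{n-1})S$. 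Lifting this relation to $R$ produces
$$a_n=\sum_{i<n}r_ia_i+s+cy \quad\text{for some } r_i,c\in R,\ s\in I.$$

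Next I would perform the unit-triangular change of generators $e_i':=e_i+r_ie_n$ for $i<n$, $e_n':=e_n$, and $e_{n+1}':=e_{n+1}-ce_n$. A direct substitution, absorbing the term $-se_n'$ into the relations $Ie_n'=0$ (possible since $s\in I$), transforms the defining relations into $Ie_i'=0$ for every $i=1,\dots,n$ and $ye_{n+1}'=\sum_{i<n}a_ie_i'$. Because the change of basis is invertible, these constitute a complete presentation of $M$. Now $e_n'$ appears in none of the relations except $Ie_n'=0$, so the rule $e_n'\mapsto 1+I$, $e_i'\mapsto 0$ for $i\neq n$ respects every relation and defines an $R$-linear map $\phi\colon M\to R/I$ which retracts the natural map $R/I\to M$ sending $1\mapsto e_n'$. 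Consequently $Re_n'\cong R/I$ splits off as a direct summand of $M$.

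The main point requiring care is verifying that, after the substitution, the displayed relations really do present $M$---equivalently, that no relation is lost or spuriously introduced. This reduces to checking that the extra term $-se_n'$ appearing in the transformed last relation lies in the submodule generated by $\{x_je_n':1\le j\le r\}$, which is immediate from $s\in I=(x_1,\dots,x_r)$. The hypothesis $n>\s(I+(y))$ enters exactly once, in the Nakayama step applied to $\mathfrak a$ inside $S$; the rest is linear bookkeeping.
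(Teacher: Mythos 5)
Your argument is correct and is precisely the one the paper points to: the paper's proof simply cites [CGT, Proposition 3.5] together with Lemma~\ref{2126}, and your Nakayama-plus-triangular-change-of-generators computation is a faithful reconstruction of that argument, with the key steps (lifting $\bar a_n\in(\bar a_1,\dots,\bar a_{n-1})S$ to $a_n=\sum r_ia_i+s+cy$, absorbing $se_n$ via $s\in I$, and reading off the splitting from $e_n'$ appearing only in the relations $Ie_n'=0$) all in place.
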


\begin{proof}
The assertion follows from a similar argument to the proof of \cite[Proposition 3.5]{CGT} using Lemma \ref{2126}.
\end{proof}

\begin{prop}\label{2202}
Let $I$ be  {a quasi-Gorenstein} ideal of $R$ of grade  {zero}.
Assume either of the following holds.
\begin{enumerate}[\quad\rm(a)]
\item
 {$(0:_RI^2)=(0:_RI)$, or}
\item
 $R/I$ is G-regular.
\end{enumerate}  Then {for each $0<n<\s(I+(0:_RI))$, there} exists a family $$\{0 \to (R/I)^n \to M_{{n,}u} \to R/(0:_RI) \to 0\}_{u\in R}$$ of exact sequences of $R$-modules such that each  $M_{{n,}u}$ is non-free indecomposable totally reflexive $R$-module, and that if $M_{{n,}u}\cong M_{{m,}v}$ for $u,v\in R$, then {$n=m$ and} $\overline u=\overline v$ in $k$.
\end{prop}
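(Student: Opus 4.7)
By Remark \ref{remqg}(2), since $I$ is quasi-Gorenstein of grade zero, the ideal $(0{:_R}I)$ is principal: I write $(0{:_R}I)=(y)$ for some $y\in\m$. Since $(0{:_R}y)=I$, both $R/I$ and $R/(y)=R/(0{:_R}I)$ are non-free totally reflexive $R$-modules. Because $(I{:_R}(0{:_R}y))=(I{:_R}I)=R$, Lemma \ref{2126} permits us to realize, for any tuple $\underline a\in R^n$, an extension $0\to(R/I)^n\to M\to R/(y)\to 0$ as the cokernel of the matrix $T(\underline x,y,\underline a)$. The hypothesis $n<\s(I+(y))$ furnishes elements $u_0,u_1,\dots,u_n\in R$ whose residues $\overline{u_0},\dots,\overline{u_n}$ form a minimal generating system of some ideal of $\overline R:=R/(I+(y))$. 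For each $u\in R$ I would set
$$
\underline a(u):=(uu_0+u_1,\;u_2,\;\dots,\;u_n),
$$
and define $M_{n,u}:=\Coker T(\underline x,y,\underline a(u))$; this will supply the required family of short exact sequences.

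Totally reflexivity of each $M_{n,u}$ is immediate from Remark \ref{remqg}(2) and the closure of the class of totally reflexive modules under extensions. For non-freeness: since $u_i\in\m$ and $y\in\m$, every entry of $T(\underline x,y,\underline a(u))$ lies in $\m$, so the given presentation is minimal, giving $\nu_R(M_{n,u})=n+1$ together with a nonzero first syzygy, which rules out $M_{n,u}\cong R^{n+1}$.

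For indecomposability, I will apply Lemma \ref{41723} (invoking hypothesis (a) or (b)) to obtain a splitting $M_{n,u}\cong (R/I)^{n-t}\oplus N$ corresponding to a sub-extension $0\to(R/I)^t\to N\to R/(y)\to 0$ with $N$ indecomposable, and then argue $t=n$. Were $t<n$, a copy of $R/I$ would split off, which translates through the matrix $T(\underline x,y,\underline a(u))$ into the statement that some entry of $\underline a(u)$ lies modulo $I+(y)$ in the ideal generated by the others together with $\m$-multiples inside $\overline R$. Reducing to the Nakayama quotient of the ambient ideal $(\overline{u_0},\dots,\overline{u_n})\overline R$ (which is a $k$-vector space with basis $\overline{u_0},\dots,\overline{u_n}$ by minimality), any such relation would force a nontrivial $k$-linear dependence of the form $\overline{u_1}+\alpha\,\overline{u_0}=0$ (where $\alpha\in k$ is the residue of $u$), contradicting the chosen minimality.

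It remains to address the non-isomorphism statement. If $M_{n,u}\cong M_{m,v}$, then comparing minimal numbers of generators yields $n=m$. To detect $\overline u\in k$ as an invariant, I would view the extension class of $M_{n,u}$ in $\Ext^1_R(R/(y),(R/I)^n)$; under the standard identification $\Ext^1_R(R/(y),R/I)\cong\overline R$, this class becomes the tuple $(\overline{uu_0+u_1},\overline{u_2},\dots,\overline{u_n})\in\overline R^{\,n}$. An isomorphism of middle modules $M_{n,u}\cong M_{n,v}$ induces automorphisms of $(R/I)^n$ and $R/(y)$ transporting one extension class to a scalar multiple of the other; passing to the Nakayama quotient and once again exploiting the $k$-linear independence of $\overline{u_0},\dots,\overline{u_n}$ will force $\overline u=\overline v$ in $k$. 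This final Ext/matrix chase is the step where I expect the main obstacle; it parallels the exact-zero-divisor arguments in \cite{CGT,RSW}, with the principal structure $(0{:_R}I)=(y)$ from Remark \ref{remqg}(2) supplying the analogous handle in the broader quasi-Gorenstein setting.
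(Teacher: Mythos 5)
Your construction is the same as the paper's: with $(0{:_R}I)=(y)$ from Remark \ref{remqg}(2), you pick residues $\overline{u_0},\dots,\overline{u_n}$ minimally generating an ideal of $\overline R=R/(I+(y))$ (possible since $n<\s(I+(y))$) and set $M_{n,u}=\Coker T(\underline x,y,uu_0+u_1,u_2,\dots,u_n)$, which matches the paper up to renaming. Total reflexivity is also handled the same way. Your indecomposability argument is essentially the paper's, though you only gesture at the key mechanism; the paper makes it precise by observing that if $M_{n,u}\cong R/I\oplus N$, then $M_{n,u}$ has two presentation matrices $T(\underline x,y,\underline a(u))$ and $T(\underline x,y,0,\underline b)$ of the same size, and equality of their Fitting ideals forces $\nu_{\overline R}(\overline{a_1(u)},\dots,\overline{a_n(u)})\le n-1$, contradicting minimal generation. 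You should nail down the ``translates through the matrix'' step using Fitting ideals rather than the informal phrase ``$\m$-multiples inside $\overline R$.''

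The genuine gap is in the non-isomorphism step. You propose to read off $\overline u$ from the class of the extension in $\Ext^1_R(R/(y),(R/I)^n)\cong\overline R^{\,n}$ and claim that an isomorphism $M_{n,u}\cong M_{n,v}$ ``induces automorphisms of $(R/I)^n$ and $R/(y)$ transporting one extension class to a scalar multiple of the other.'' That inference does not come for free: an isomorphism of the middle terms of two short exact sequences need not respect the filtrations, so it need not induce an isomorphism of extensions. You would first have to show that the submodule $(R/I)^n\subseteq M_{n,u}$ (equivalently the quotient $R/(y)$) is preserved by every isomorphism, which is exactly the nontrivial content you flag as ``the main obstacle.'' The paper sidesteps this entirely by using Fitting ideals, which are intrinsic invariants of the module and independent of any chosen presentation or extension: from $M_{n,u}\cong M_{n,v}$ one gets $I+(y,a_1+ub,a_2,\dots,a_n)=I+(y,a_1+vb,a_2,\dots,a_n)$ directly, and then \cite[Lemma 3.7]{CGT} gives $\overline u=\overline v$. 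You should replace the Ext-class chase with the Fitting-ideal comparison; otherwise the argument as written is incomplete.
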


\begin{proof}
The proof is similar to that of \cite[Theorems 3.6 and 3.8]{CGT}, {but we bring the proof for convenience of the reader.}
{By Remark \ref{remqg}(2) $(0:_RI)$ is a principal ideal. Write $(0:_RI)=(y)$, for some $y\in R$, and set $\overline{R}=R/(I+(y))$. As $n<\s(I+(y))$, we have $\nu_{\overline{R}}(\overline{a_1},\dots,\overline{a_n})=n$ for some $a_1,\dots,a_n\in R$.
Let $M$ be the cokernel of the $R$-linear map defined by $T(\underline{x},y,\underline{a})$, where $I=(x_1,\dots,x_r)$.
Since $(I:_R(0:_Ry))=(I:_RI)=R$ {by Remark \ref{remqg}(2)}, Lemma \ref{2126} provides an exact sequence
\begin{equation}\label{2136}
0 \to (R/I)^n \to M \to R/(y) \to 0.
\end{equation}
Suppose the $R$-module $M$ is decomposable. It follows from Lemma \ref{41723} that there is an exact sequence $0 \to (R/I)^{n-1} \to N \to R/(y) \to 0$, which is isomorphic to a direct summand of \eqref{2136} as an $R$-complex.
Now, Lemma \ref{2126} shows that there is an exact sequence $R^{(n-1)r+1} \xrightarrow{T(\underline{x},y,b_1,\dots,b_{n-1})} R^n \to N \to 0$.
Since $M\cong R/I\oplus N$, it is seen that $M$ has two presentation matrices $T(I,y,a_1,\dots,a_n)$ and $T(I,y,0,b_1,\dots,b_{n-1})$, both of which are $(nr+1)\times(n+1)$ matrices. Taking $nr$-th Fitting invariant, we have equality $I + (y,a_1,\dots,a_n)=I +(y,b_1,\dots,b_{n-1})$ of ideals of $R$, see \cite[Page 21]{BH}. Taking the image in $\overline{R}$, we see that $\nu_{\overline{R}}(\overline{a_1},\dots,\overline{a_n})\le n-1$, which is a contradiction.

Now choose $a_1,\dots,a_n,b\in R$ such that $\nu_{\overline{R}}(\overline{a_1},\dots,\overline{a_n},\overline{b})=n+1$.
For each $u\in R$, let $M_{{n,}u} = \Coker(T(\underline{x},y,a_1+ub,a_2,\dots,a_n))$.
Note that $\nu_{\overline{R}}(\overline{a_1+ub},\overline{a_2},\dots,\overline{a_n})=n$.
By the last argument, $M_{{n,}u}$ is an indecomposable $R$-module admitting an exact sequence of $R$-modules of the form $0 \to (R/I)^n \to M_{{n,}u} \to R/(y) \to 0$.
Let $u,v\in R$ be elements {such that} $M_{{n,}u}\cong M_{{m,}v}$ {for some $n$ and $m$}. {Clearly, one has $m=n$ by Lemma \ref{2126}.} Then, taking the $nr$-th Fitting invariants of $M_{{n,}u}$ and $M_{{n,}v}$, we see that $I+(y,a_1+ub,a_2,\dots,a_n)=I+(y,a_1+vb,a_2,\dots,a_n)$.
This induces an equality $(\overline{a_1+ub},\overline{a_2},\dots,\overline{a_n})=(\overline{a_1+vb},\overline{a_2},\dots,\overline{a_n})$ of ideals of $\overline{R}$. Now by \cite[Lemma 3.7]{CGT} we get $\overline u=\overline v$ in $k$.}
\end{proof}

 {The following example shows that a non-Gorenstein local ring that admits a totally reflexive module with minimal multiplicity, may also admit infinitely many totally reflexive modules with non-minimal multiplicities; see Definition \ref{minMult}.}

\begin{ex}\label{qci2} Let $(R,\m,k)$ and $I$ be same as Example \ref{qci}. Assume $k$ is an infinite field. Note that {since $\dim R=0$ and $\m^2(R/I)=0$, $R/I$ has minimal multiplicity, and so} $R/I$ is a G-regular ring by Corollary \ref{Greg}. Since $(0:_RI)=(xy)\subseteq I$, $\s(I+(0:_RI))=\s(I)=2$. For each $\lambda \in k$, let $M_\lambda$ be an $R$-module with a minimal presentation
$$
\begin{CD}
 R^3 @>{\left(\begin{array}{ccc}
x & y & z+\lambda t \\
0 & 0 & xy
\end{array}\right)} >> R^2 \rightarrow M_\lambda \rightarrow 0.
\end{CD}
$$
Then by Proposition \ref{2202}, $\{M_\lambda\}_{\lambda \in k}$ is an infinite family of non-free indecomposable and pairwise non-isomorphic totally reflexive $R$-modules. By Lemma \ref{2126} there exists an exact sequence $0\rightarrow R/I \rightarrow M_\lambda \rightarrow R/(xy) \rightarrow 0$. Since $\m^2$ is not contained in $(xy)$, the surjection $M_\lambda \twoheadrightarrow R/(xy)$ shows that $\m^2 M_\lambda \neq 0$. {Therefore $M_\lambda$ does not have minimal multiplicity.}
\end{ex}

{Now we are ready to prove Theorem \ref{2219}.}

\begin{proof}[Proof of Theorem \ref{2219}]
{First, we observe that $\dim R/(I+(0:_RI))=\dim_R I/I^2$. In fact,
\begin{align*}
\V(I+(0:I))&=\V(I)\cap\V(0:I)\\
&=\supp(R/I)\cap\supp(I)=\supp((R/I)\otimes I)=\supp(I/I^2).
\end{align*} }
Now, the theorem follows from Proposition \ref{2202},  and the fact if $\dim R/(I+(0:_RI))\geq 2$ then $\s(I+(0:_RI))=\infty$. Note that $\nu_R(M_{n,u})=n+1$ follows from Lemma \ref{2126}(2).
\end{proof}

\begin{lem} \label{L21} Let $Q$ be a local ring. Suppose $x_1,\dots, x_r$ and $y_1,\dots,y_r$ are $Q$-regular sequences and set $I=(x_1,\dots,x_r)$ and $J=(y_1,\dots,y_r)$. Choose any regular sequence $f_1,\dots,f_r \in IJ$ and set $R=Q/(f_1,\dots,f_r)$. Then the image of the ideal $I$ (respectively $J$) in $R$ is a {quasi-complete intersection}  ideal of grade zero.
\end{lem}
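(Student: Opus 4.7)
The plan is to prove the statement for the image of $I$ in $R$ (the case of $J$ is symmetric). Because $f_i \in IJ$, one can expand $f_i$ as a $Q$-linear combination of products $x_ky_l$ and collect terms to obtain $f_i = \sum_{j=1}^r a_{ij}x_j$ with $a_{ij}\in J$. Since $y_1,\dots,y_r$ is a regular sequence in a proper ideal of the local ring $Q$, we have $J\subseteq\m$, whence $(f):=(f_1,\dots,f_r)\subseteq IJ\subseteq\m I$. Therefore $IR = I/(f)$ is a proper ideal with $\nu_R(IR)=\nu_Q(I)=r$; in particular $x_1,\dots,x_r$ is a minimal generating set of $IR$ in $R$, and $R/IR\cong Q/I$.

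The core of the argument is a double computation of $\Tor^Q_*(Q/I,R)$ as a graded $R/IR$-algebra. Since $x_1,\dots,x_r$ is $Q$-regular, the Koszul complex $K^Q(x_1,\dots,x_r)$ is a $Q$-free DGA resolution of $Q/I$, so the Koszul complex $K:=K^R(x_1,\dots,x_r)=K^Q(x_1,\dots,x_r)\otimes_Q R$ over $R$ on the minimal generating set of $IR$ satisfies $H_*(K)\cong \Tor^Q_*(Q/I,R)$. On the other hand, since $f_1,\dots,f_r$ is also $Q$-regular, $K^Q(f_1,\dots,f_r)$ is a $Q$-free DGA resolution of $Q/(f)=R$, so $\Tor^Q_*(Q/I,R)$ is equally computed by $Q/I\otimes_Q K^Q(f_1,\dots,f_r)$. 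The latter is the exterior algebra $\wedge^*_{Q/I}(Q/I)^r$ whose differential, induced by $e_i'\mapsto f_i\bmod I=0$, vanishes. By the intrinsic character of the graded algebra structure on $\Tor$, we obtain an isomorphism of graded $R/IR$-algebras
$$
H_*(K)\cong \wedge^*_{R/IR}(R/IR)^r.
$$

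Two consequences follow at once. First, the top Koszul homology gives $H_r(K)\cong (0:_RIR)\cong R/IR\ne 0$, so $\gr IR=0$. Second, $H_1(K)\cong (R/IR)^r$ is a free $R/IR$-module of rank $r$, and $H_*(K)$ is the exterior algebra on $H_1(K)$; hence the natural homomorphism $\lambda^{R/IR}_*\colon \wedge^*_{R/IR}H_1(K)\to H_*(K)$ of graded $R/IR$-algebras extending $\mathrm{id}_{H_1(K)}$ is an isomorphism, so $IR$ is quasi-complete intersection. The most delicate step to verify is that the isomorphism between the two $\Tor$ computations respects the graded multiplicative structure, matching the exterior algebra on the right with the Koszul DGA multiplication on $H_*(K)$; this is a standard fact, since the graded algebra structure on $\Tor^Q_*(A,B)$ for commutative $Q$-algebras $A,B$ is canonical and independent of the choice of DGA resolution used to compute it.
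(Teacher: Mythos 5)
Your proof is correct, and it takes a genuinely different and more self-contained route than the paper's. The paper's proof simply observes $\gr_R IR = 0$ from grade comparisons and then invokes \cite[8.9]{AHS} for the fact that a quotient of a complete-intersection ideal by a smaller complete-intersection ideal is quasi-complete intersection. You instead reprove that fact directly in this setting via the ``balanced Tor'' trick: computing $\Tor^Q_*(Q/I,R)$ once by tensoring the Koszul DG-algebra resolution of $Q/I$ with $R$, and once by tensoring the Koszul DG-algebra resolution of $R$ with $Q/I$, where the second computation has zero differential because $(f)\subseteq I$ and hence yields an exterior algebra on the nose. This buys independence from the external citation at the cost of invoking the standard (but nontrivial) fact that the graded $\Tor$-algebra is computable from either DG-algebra resolution with canonically matching products.

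One step worth spelling out a bit more explicitly: from the abstract graded-algebra isomorphism $H_*(K)\cong\wedge^*_{R/IR}(R/IR)^r$ you conclude that the \emph{natural} map $\lambda^{R/IR}_*\colon\wedge^*_{R/IR}H_1(K)\to H_*(K)$ is bijective. This is correct, but it relies on the universal property of the exterior algebra (it is generated in degree $1$ subject only to $z^2=0$), so that any graded-algebra map out of $\wedge^*_{R/IR}H_1(K)$ is determined by its restriction to degree $1$; since both $\lambda$ and the conjugate of your abstract isomorphism by $\phi_1^{-1}$ restrict to $\mathrm{id}_{H_1(K)}$, they coincide, forcing $\lambda$ to be the isomorphism. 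Likewise, you correctly reduce to a minimal generating set by observing $(f)\subseteq \m I$, which is needed because the definition of quasi-complete intersection is stated in terms of the Koszul complex on a minimal generating set. With those points made explicit, the argument is complete.
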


\begin{proof} Since $(f_1,\dots,f_r)$ and $I$ both are ideals of same grade $r$, $\gr IR=0$. Since $(f_1,\dots,f_r)$ and $I$ both are complete intersection ideals and $(f_1,\dots,f_r)\subset I$, $I/(f_1,\dots,f_r)$ is a quasi-complete intersection ideal of $R$; see \cite[8.9]{AHS}.
\end{proof}

\begin{cor}
Let $(Q,\n,k)$ be a G-regular local ring, and let $x_1,\dots,x_r \in \n \setminus \n^2$ be a regular sequence. Assume $\dim Q \geq r+2$. Then for any choice of a regular sequence $f_1,\dots,f_r \in (x_1,\dots,x_r)^2$ and for any integer $n\geq 1$, there exists a family $\{ M_{n,u} \}_{u\in k}$ of pairwise non-isomorphic indecomposable totally reflexive $R=Q/(f_1,\dots,f_r)$-modules such that $\nu_R( M_{n,u})=n+1$ and $\cx_R( M_{n,u})\leq r$.
\end{cor}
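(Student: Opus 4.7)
My plan is to reduce the corollary to Theorem~\ref{2219} applied to the ideal $I=(x_1,\dots,x_r)R$, and then bound the complexity of the resulting modules using the quasi-complete intersection structure of $I$.

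First, Lemma~\ref{L21} (applied with both regular sequences in that lemma taken to be $x_1,\dots,x_r$, so that $IJ=(x_1,\dots,x_r)^2$) shows that $I$ is a quasi-complete intersection ideal of $R$ of grade zero, and by Remark~\ref{R11} it is quasi-Gorenstein. Next I would verify the two hypotheses of Theorem~\ref{2219}. Since $(f_1,\dots,f_r)\subseteq(x_1,\dots,x_r)^2\subseteq(x_1,\dots,x_r)$, there is an isomorphism $R/I\cong Q/(x_1,\dots,x_r)$; starting from the G-regular ring $Q$ and killing the $x_i$ one at a time, repeated application of Proposition~\ref{46} shows that $R/I$ is G-regular, so alternative (b) of Theorem~\ref{2219} is satisfied. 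For the dimension hypothesis, the inclusion $(f_1,\dots,f_r)\subseteq(x_1,\dots,x_r)^2$ gives $I/I^2\cong(x_1,\dots,x_r)/(x_1,\dots,x_r)^2$ as $R/I$-modules, and since $x_1,\dots,x_r$ is a $Q$-regular sequence this conormal module is $R/I$-free of rank $r$; hence $\dim_R(I/I^2)=\dim(R/I)=\dim Q-r\geq 2$.

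Theorem~\ref{2219} then produces, for each $n\geq1$, a family $\{M_{n,u}\}_{u\in k}$ of pairwise non-isomorphic indecomposable non-free totally reflexive $R$-modules with $\nu_R(M_{n,u})=n+1$, fitting into short exact sequences
$$
0\to (R/I)^n\to M_{n,u}\to R/(0:_RI)\to 0.
$$
It remains to prove $\cx_R(M_{n,u})\leq r$. Since complexity is subadditive on short exact sequences, it is enough to show $\cx_R(R/I)\leq r$ and $\cx_R(R/(0:_RI))\leq r$. The first bound is a standard consequence of $I$ being a quasi-complete intersection ideal of grade zero with $\nu_R(I)=r$: the Tate construction of \cite{AHS} yields a minimal $R$-free resolution of $R/I$ with Poincar\'{e} series $1/(1-t)^r$, so the Betti numbers grow as a polynomial of degree $r-1$. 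For the second, Remark~\ref{remqg}(2) gives $(0:_RI)=(y)$ principal with $(0:_Ry)=I$, so the exact sequence $0\to R/I\xrightarrow{y} R\to R/(y)\to 0$ shows that the first syzygy of $R/(0:_RI)$ is $R/I$, whence $\cx_R(R/(0:_RI))=\cx_R(R/I)\leq r$.

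The main delicate point is the iterated use of Proposition~\ref{46}: at step $i$ one needs the image of $x_i$ in $Q/(x_1,\dots,x_{i-1})$ to lie outside the square of the maximal ideal there. This is the only place the hypothesis ``$x_1,\dots,x_r\in\n\setminus\n^2$'' is used beyond its regular-sequence role in Lemma~\ref{L21}, and it is naturally interpreted---as is standard in the context of Proposition~\ref{46}---as asserting linear independence of the $x_i$ in $\n/\n^2$, under which reading the iteration proceeds without difficulty.
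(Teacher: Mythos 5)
Your proof is correct and follows the paper's approach closely: reduce to Theorem~\ref{2219} applied to $I=(x_1,\dots,x_r)R$ (quasi-Gorenstein of grade zero via Lemma~\ref{L21} and Remark~\ref{R11}, with $R/I\cong Q/(x_1,\dots,x_r)$ G-regular by iterating Proposition~\ref{46}), and then bound the complexity by noting $\Omega\bigl(R/(0:_RI)\bigr)\cong R/I$ together with $P^R_{R/I}(t)=(1-t)^{-r}$ from \cite[Theorem 6.2]{AHS}. The only minor variation is in verifying $\dim_R I/I^2\geq 2$: you compute $I/I^2$ directly as the free conormal module of rank $r$ over $R/I$, whereas the paper observes $(0:_RI)\subseteq I$ and invokes the identity $\dim_R I/I^2=\dim R/(I+(0:_RI))$ established in the proof of Theorem~\ref{2219}; both give $\dim R/I\geq 2$, and you also sensibly flag the implicit linear-independence reading of the hypothesis $x_1,\dots,x_r\in\n\setminus\n^2$ needed for the iterated application of Proposition~\ref{46}, a point the paper leaves tacit.
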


\begin{proof} Set $I=(x_1,\dots,x_r)R$. It follows from Lemma \ref{L21} {and Remark \ref{R11}} that $I$ is a quasi-Gorenstein ideal of $R$ with $\gr I=0$. Also, by {Proposition \ref{46}}, $R/I\cong Q/(x_1,\dots,x_r)$ is a $G$-regular ring. Note that since $f_1,\dots,f_r \in (x_1,\dots,x_r)^2$, one has $(0:_RI)\subset I$. Therefore ${\dim_RI/I^2=}\dim R/(I+(0:_RI)) =\dim R/I\geq 2$. Now the first part follows from Theorem \ref{2219}.

For the last part, consider the exact sequence $0 \to (R/I)^n \to M_{n,u} \to R/(0:_RI) \to 0$.  {Since $R/I \cong (0:_RI)=\Omega R/(0:_RI)$,}  $\Omega^j(R/(0:_RI))\cong \Omega^{j-1}(R/I)$, for all $j \geq1$. Hence we have $\cx_R(M_{n,u})\leq \cx_R(R/I)$. Note that $P^R_{R/I}(t)=\frac{1}{(1-t)^r}$; see \cite[Theorem 6.2]{AHS}. This implies that $\cx_R(R/I) \leq r$.
\end{proof}

We need the following lemma for Example \ref{EX}.

\begin{lem}\label{a}
Let $S=k[x_0,\dots,x_n]$ be a polynomial ring over {a} field $k$ with $n\ge3$.
Then $\cc=(x_i^2-x_{i-1}x_{i+1}\mid i=1,\dots,n-1)$ is a complete intersection ideal.
\end{lem}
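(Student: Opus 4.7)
The plan is to show that $\mathfrak{c}$ is generated by a regular sequence by checking that $\mathrm{ht}(\mathfrak{c})=n-1$; since $S$ is Cohen-Macaulay, this equality forces the $n-1$ generators of $\mathfrak{c}$ to form a regular sequence. Krull's height theorem immediately gives $\mathrm{ht}(\mathfrak{c})\le n-1$, so the task reduces to proving that every minimal prime $\mathfrak{p}$ of $\mathfrak{c}$ satisfies $\mathrm{ht}(\mathfrak{p})\ge n-1$. I would handle this by a case split on whether $x_0\in\mathfrak{p}$.

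In the first case, suppose $x_0\in\mathfrak{p}$. Then from $x_1^2=f_1+x_0x_2\in\mathfrak{p}$ and primality, $x_1\in\mathfrak{p}$. Iterating with $x_i^2=f_i+x_{i-1}x_{i+1}$ for $i=2,\dots,n-1$ yields $x_i\in\mathfrak{p}$ for $0\le i\le n-1$. Hence $\mathfrak{p}\supseteq(x_0,\dots,x_{n-1})$, an ideal which is prime of height $n$, and so $\mathrm{ht}(\mathfrak{p})\ge n\ge n-1$.

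In the second case, suppose $x_0\notin\mathfrak{p}$, and work in the localization $T=S[x_0^{-1}]$. In $T/\mathfrak{c}T$, the relation $f_1=0$ gives $x_2=x_1^2/x_0$; substituting into $f_2=0$ gives $x_3=x_1^3/x_0^2$, and inductively $x_i=x_1^i/x_0^{i-1}$ for every $i\ge 1$. Thus $T/\mathfrak{c}T\cong k[x_0^{\pm1},x_1]$, a two-dimensional integral domain. Because $\dim T=n+1$, the ideal $\mathfrak{c}T$ is itself prime of height $(n+1)-2=n-1$, so the minimal prime $\mathfrak{p}T$ of $\mathfrak{c}T$ equals $\mathfrak{c}T$; this gives $\mathrm{ht}(\mathfrak{p})=\mathrm{ht}(\mathfrak{p}T)=n-1$.

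Combining the two cases yields $\mathrm{ht}(\mathfrak{c})\ge n-1$, hence $\mathrm{ht}(\mathfrak{c})=n-1$, and Cohen-Macaulayness of $S$ finishes the proof. The main technical point is computing $T/\mathfrak{c}T$ explicitly; the inductive elimination $x_{i+1}=x_i^2/x_{i-1}$ is straightforward once one has inverted $x_0$, and the hypothesis $n\ge 3$ only enters to ensure the statement is nontrivial (for $n=2$ the ideal has a single generator and the claim is immediate).
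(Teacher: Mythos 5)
Your overall strategy---bounding $\height(\cc)$ above by Krull's height theorem and below by checking minimal primes, then invoking Cohen--Macaulayness of $S$---is sound, and Case~1 ($x_0\in\p$) is correct. However, Case~2 contains a genuine error: the inductive elimination does not work. After substituting $x_2=x_1^2/x_0$ into $f_2=x_2^2-x_1x_3$, you obtain $x_1^4/x_0^2-x_1x_3=x_1\bigl(x_1^3/x_0^2-x_3\bigr)=0$ in $T/\cc T$, and concluding $x_3=x_1^3/x_0^2$ requires cancelling $x_1$, which is illegitimate since $x_1$ is a zero-divisor in $T/\cc T$. In fact $T/\cc T$ is \emph{not} a domain: for $n=3$, putting $y=x_3-x_1^3/x_0^2$ gives $T/\cc T\cong k[x_0^{\pm1},x_1,y]/(x_1y)$, which has two minimal primes. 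Concretely, $V(\cc)\cap D(x_0)$ always contains the affine cone over the rational normal curve \emph{and} the linear subspace $V(x_1,\dots,x_{n-1})$, so $\cc T$ is never prime for $n\ge3$. Your argument therefore cannot conclude anything about the heights of the other minimal primes, and the proof has a gap.

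The paper instead bounds $\dim(S/\cc)$ directly. Reducing modulo $x_0$ gives $S/(\cc+(x_0))\cong k[x_1,\dots,x_n]/(x_1^2,x_2^2-x_1x_3,\dots,x_{n-1}^2-x_{n-2}x_n)$, and an easy induction (using $x_i^2=x_{i-1}x_{i+1}$) shows $x_i^{2^i}=0$ there for $1\le i\le n-1$. Adding $x_n$ makes every variable nilpotent, so $S/(\cc+(x_0,x_n))$ is Artinian and hence $\dim(S/\cc)\le2$. Since $S$ is a catenary domain of dimension $n+1$, this forces $\height(\cc)\ge n-1$, matching the Krull bound. This approach avoids analyzing minimal primes entirely, which is precisely where your argument breaks down; if you want to salvage the localization route, you would need to show that \emph{every} minimal prime of $\cc T$ has height $n-1$, not just exhibit one candidate.
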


\begin{proof}
Since $\cc$ is generated by $n-1$ elements, $\gr\cc=\height\cc \le n-1$.
{We have $S/\cc+(x_0)\cong k[x_1,\dots,x_n]/(x_1^2,x_2^2-x_1x_3,x_3^2-x_2x_4,\dots,x_{n-1}^2-x_{n-2}x_n)$, and easily see that in this ring $(x_i)^{2^i}=0$ for all $1\le i\le n-1$.
Hence $S/\cc+(x_0,x_n)$ is Artinian, which shows $\dim S/\cc\le2$.
Thus $\gr\cc=n-1$.}
\end{proof}

{In the following examples, first, we construct a class of local rings satisfying in the assumptions of Theorem \ref{2219}. Next, in (2) we construct a quasi-Gorenstein ideal containing $\m^2$ which is not a quasi-complete intersection ideal. As a result the base ring satisfy in the assumptions of Theorem \ref{C21}(1) but not Corollary \ref{T22}. Finally, in (3) we give an example of $3$-dimensional local ring that satisfy in the assumptions of Theorems \ref{C21} and \ref{2219}, and Corollary \ref{T22}.}

\begin{ex}\label{EX}
Let $Q=k[[x_0,\dots,x_n]]$ be a formal power series ring over a field $k$ with $n\geq 3$.
Then $\cc=(x_i^2-x_{i-1}x_{i+1}\mid i=1,\dots,n-1)$ is a complete intersection ideal of $Q$ by Lemma \ref{a}.
Let $\p$ be the ideal of $Q$ generated by the $2\times 2$ minors of the matrix $\left(\begin{smallmatrix}
x_0 & \cdots & x_{n-1} \\
x_1&\cdots & x_n
\end{smallmatrix}\right)$. Clearly $\p$ contains $\cc$, and it is well known that $\p$ is a Cohen-Macaulay prime ideal of height $n-1$; see \cite[Theorem 6.4]{E}.
Since $x_1\dots x_{n-1} \in (\cc:_Q\p) \backslash \p$ and $\p$ is prime, $\gr_Q(\p + ({\cc}:_Q\p))\geq n $.
Hence $\p$ is geometrically linked to $(\cc:_Q\p)$, and therefore the quotient $A=Q/(\p+(\cc:_Q\p))$ is a $1$-dimensional Gorenstein ring; see \cite{PS} and \cite[Propositions 1.2 and 1.3]{U}.
\begin{enumerate}[(1)]
\item
Then  $J=(x_1,\dots,x_n)A$.
Then $A/J\cong k[[x_0]]$ is a discrete valuation ring.
In particular, $J$ is a Gorenstein ideal of $A$ of grade zero.
{Hence $R=A[[y,z,w,v]]/(y^2,yz,z^2)$ is a $3$-dimensional Cohen-Macaulay non-Gorenstein local ring, $I=JR=(x_1,\dots,x_n)R$ is a quasi-Gorenstein ideal of $R$ of grade zero.
Also $R/I$ is {G-regular} and $I/I^2=(J/J^2)\otimes_AR$ has dimension at least $2$.}
Therefore Theorem \ref{2219}(b) applies for $R$ and $I$.
Thus for each integer $s\geq 1$ there exists a family $\{ M_{s,u} \}_{u\in k}$ of pairwise non-isomorphic indecomposable totally reflexive $R$-modules such that $\nu_R( M_{s,u})=s+1$.
\item
Let $n=4$, and let $\n=(x_0,\dots,x_4)A$ be the maximal ideal of $A$.
Since $A$ is Gorenstein, $\n$ is a Gorenstein ideal of $A$ of grade $1$.
Hence $R=A[[y,z]]/(y^2,yz,z^2)$ is a $1$-dimensional Cohen-Macaulay local ring of type $2$, and $I=\n R=(x_0,\dots,x_4)R$ is a quasi-Gorenstein ideal of grade $1$ containing $\m^2$, where $\m=(x_0,\dots,x_4,y,z)R$ is the maximal ideal of $R$.
We have $\nu_R(\m)=7$ and $\nu_R(I)=5$.
The second coefficient of the formal power series $\frac{1-t^2}{(1-t)^{5}(1-2t)}$ is 28 while by using Macaulay2, we get $\beta^R_2(k)=33$.
Hence the equality in Corollary \ref{T22} does not hold, which implies that $I$ is not a quasi-complete intersection ideal of $R$. This shows that one can not relax the hypothesis of $I$ being a quasi-Gorenstein ideal in Corollary \ref{T22}.
\item
Let $n=3$ and $R=(Q/\cc)[[y,z,w]]/(yz,zw,wy)=\displaystyle \frac{k[[x_0,x_1,x_2,x_3,y,z,w]]}{(x_1^2-x_0x_2,x_2^2-x_1x_3,yz,zw,wy)}.$
Then $R$ is a Cohen-Macaulay non-Gorenstein local ring of dimension $3$.
One has $\q=(0:_R\p)=(x_1,x_2)R$.
The ideal $\q$ is quasi-complete intersection; see Lemma \ref{L21}.
We have $\gr \q=0$, $\p\cap\q=0$ and $\dim R/\p+\q=2$. Hence Theorem \ref{2219}(a) applies for $R$. Therefore for each integer $s\geq 1$ there exists a family $\{ M_{s,u} \}_{u\in k}$ of pairwise non-isomorphic indecomposable totally reflexive $R$-modules such that $\nu_R( M_{s,u})=s+1$.
Moreover, the ideal $J=(x_0,x_1,x_2,x_3,y+z+w)R$ contains $\q$ and $J/\q$ is a complete-intersection ideal of $R/\q$. Hence $J$ is also a quasi-complete intersection ideal of $R$.
As $J$ contains $\m^2$, {the} type of $R$ is $\nu_R(\m)-\nu_R(J)=2$, and $P^R_k(t)=\frac{(1-t^2)^3}{(1-t)^5(1-2t)}$ by Corollary \ref{T22}.
\end{enumerate}
\end{ex}

\begin{ac}
The authors thank Olgur Celikbas for reading the article and {giving} comments.
They thank Sean Sather-Wagstaff for suggesting \ref{Sean}.
{Finally, they} are {very} grateful to the referee for {many} valuable suggestions and helpful comments.
\end{ac}


\end{document}